\documentclass[11pt,reqno]{amsart}

\usepackage{amsmath,amssymb,amsthm,mathtools,mathrsfs}
\usepackage[a4paper,margin=30mm]{geometry}
\usepackage[hidelinks]{hyperref}
\usepackage{verbatim}

\numberwithin{equation}{section}

\newtheorem{theorem}{Theorem}[section]
\newtheorem{proposition}[theorem]{Proposition}
\newtheorem{lemma}[theorem]{Lemma}
\newtheorem{corollary}[theorem]{Corollary}
\newtheorem{remark}[theorem]{Remark}
\newtheorem{claim}[theorem]{Claim}
\newtheorem*{classicalBL}{Classical Br\'ezis--Lieb lemma}
\newcommand{\R}{\mathbb R}
\newcommand{\Hhalf}{\R^n_+}
\newcommand{\bdH}{\partial\R^n_+}
\newcommand{\ST}{S_{\mathrm T}}
\newcommand{\MT}{\mathcal M_{\mathrm T}}
\newcommand{\NT}{\mathcal N_{\mathrm T}}
\newcommand{\Dp}{\dot W^{1,p}}
\newcommand{\PT}{\mathcal P_{\mathrm T}}

\newcommand{\dist}{\operatorname{dist}}

\title[Stability and compactness for the Sobolev trace inequality]
{Sharp One-bubble Critical-Point Stability and Global Compactness for the
Sobolev Trace Inequality}
\author{Bao Yu}
\thanks{
Bao Yu:
School of Mathematical Sciences,
University of Science and Technology of China,
Hefei, Anhui Province, P.~R.~China, 230006.
Email: \texttt{baoyu1@mail.ustc.edu.cn}.
}

\author{Yang Zhou}
\thanks{
Yang Zhou:
School of Mathematical Sciences,
University of Science and Technology of China,
Hefei, Anhui Province, P.~R.~China, 230006.
Email: \texttt{zy19700816@mail.ustc.edu.cn}.
}
\date{}
\subjclass[2020]{35J20, 35B33, 46E35}
\keywords{Sobolev trace inequality, one-bubble critical-point stability, Struwe-type compactness}

\begin{document}
\begin{abstract}
Let $n\ge3$ and $1<p<n$.  We first prove the local trace analogue of
the sharp one-bubble critical-point stability theorem of Liu and
Zhang~\cite{LiuZhang2025}: near a positive trace-bubble, the
Euler--Lagrange residual controls the gradient distance to the
normalized trace-bubble manifold with the sharp power
$\max\{1,p-1\}$. Then, we establish a Struwe-type compactness theorem for
the critical trace functional, which gives the trace counterpart of
the Mercuri--Willem decomposition~\cite{MercuriWillem2010}.
Combining Struwe-type compactness with the local stability estimate yields
a sharp quantitative one-bubble critical-point stability theorem.
\end{abstract}

\maketitle

\section{Introduction and main results}

\subsection{Background and motivation}

Given $n\ge 3$ and $1<p<n$, we denote by
$\dot W^{1,p}(\mathbb R^n)$ the completion of
$C_c^\infty(\mathbb R^n)$ under the seminorm
\[
\|u\|_{\dot W^{1,p}(\mathbb R^n)}
:=
\left(
\int_{\mathbb R^n}|\nabla u|^p\,dx
\right)^{\frac1p}.
\]
Let
\[
 p^*:=\frac{np}{n-p},
 \qquad
 p_*:=\frac{(n-1)p}{n-p},
 \qquad
 p':=\frac{p}{p-1}.
\]
Additionally, denote by $W^{-1,p'}(\mathbb R^n)$ the dual space of
$\dot W^{1,p}(\mathbb R^n)$.
The sharp Sobolev inequality states that
\begin{equation}
 S(n,p)\|u\|_{L^{p^*}(\R^n)}
 \le \|\nabla u\|_{L^p(\R^n)},
 \qquad u\in\dot W^{1,p}(\R^n).
 \label{eq:sobolev-background}
\end{equation}
Aubin and Talenti~\cite{Aubin1976,Talenti1976} identified the sharp
constant and showed that all extremals form the $(n+2)$-dimensional
manifold
\begin{equation}
 \mathcal M_{\mathrm S}
 :=\{aT[z,\lambda]:a\ne0,\ \lambda>0,\ z\in\R^n\},
 \qquad
 T[z,\lambda](x)
 :=c_{n,p}
 \left(\frac{\lambda}
 {1+\lambda^{p'}|x-z|^{p'}}\right)^{\frac{n-p}{p}}.
 \label{eq:talenti-family-background}
\end{equation}
Here $c_{n,p}>0$ is chosen so that the normalized Talenti bubbles
solve the corresponding Euler-Lagrange equation of \eqref{eq:sobolev-background}, 
\begin{equation}
 -\operatorname{div}(|\nabla T|^{p-2}\nabla T)
 =T^{p^*-1}
 \qquad\text{in }\R^n.
 \label{eq:sobolev-critical-equation-background}
\end{equation}

Once the minimizers of \eqref{eq:sobolev-background} and all positive
solutions of \eqref{eq:sobolev-critical-equation-background} are known,
two natural stability questions arise.  In the functional setting, does
the deficit control the distance to the extremal manifold
$\mathcal M_{\mathrm S}$?  Similarly, in the PDE setting, does being an
approximate positive solution of
\eqref{eq:sobolev-critical-equation-background} imply being close to a
Talenti bubble?

The question on the stability of functional inequalities was first raised by Br\'ezis and Lieb~\cite{BrezisLieb1985} for $p=2$, and it was settled by Bianchi and Egnell~\cite{BianchiEgnell1991}.
More precisely, for $u\ne0$ set
\[
 \delta_{\mathrm S}(u)
 :=\frac{\|\nabla u\|_{L^p(\R^n)}}
          {\|u\|_{L^{p^*}(\R^n)}}-S(n,p),
 \qquad
 d_{\mathrm S}(u)
 :=\inf_{v\in\mathcal M_{\mathrm S}}
 \frac{\|\nabla(u-v)\|_{L^p(\R^n)}}
      {\|\nabla u\|_{L^p(\R^n)}}.
\]
When $p=2$, Bianchi and Egnell~\cite{BianchiEgnell1991} proved that
there is $C_{\mathrm{BE}}(n)>0$ such that
\begin{equation}
 \delta_{\mathrm S}(u)
 \ge C_{\mathrm{BE}}(n)d_{\mathrm S}(u)^2,
 \label{eq:bianchi-egnell-background}
\end{equation}
and both the gradient distance and the exponent $2$ are optimal. However, Bianchi--Egnell's method relies heavily on the Hilbert-space
structure available at \(p=2\) and does not extend directly to
\(p\ne2\). For
all $1<p<n$, the sharp gradient stability was finally proved by Figalli and
Zhang~\cite{FigalliZhang2022}
\begin{equation}
 \delta_{\mathrm S}(u)
 \ge c(n,p)d_{\mathrm S}(u)^{\max\{2,p\}}.
 \label{eq:sobolev-functional-stability-background}
\end{equation}
Their proof combines delicate nonlinear Taylor-remainder estimates with
a perturbed spectral-gap inequality around a Talenti bubble.

As for the stability of critical points, the qualitative starting point is Struwe's global compactness
theorem~\cite{Struwe1984}, extended to the $p$-Laplacian by Mercuri and
Willem~\cite{MercuriWillem2010}. Let $2^*=2n/(n-2)$ and define
\[
 \Gamma_{\mathrm S}(u)
 :=\bigl\|\Delta u+|u|^{2^*-2}u\bigr\|_{\dot H^{-1}(\R^n)}.
\]
Ciraolo, Figalli, and Maggi~\cite{CiraoloFigalliMaggi2018} obtained the
first sharp quantitative one-bubble estimate: for suitable
$a_1>0$, $z\in\R^n$, and $\lambda>0$,
\begin{equation}
 \|\nabla u-a_1\nabla T[z,\lambda]\|_{L^2(\R^n)}
 \le C\Gamma_{\mathrm S}(u).
 \label{eq:cfm-one-bubble-background}
\end{equation}
Later, Figalli and Glaudo~\cite{FigalliGlaudo2020} proved the corresponding
linear multi-bubble estimate, for suitable $a_i>0$, $z_i\in\R^n$,
and $\lambda_i>0$,
\begin{equation}
 \left\|\nabla u-\sum_{i=1}^{\nu}
 a_i\nabla T[z_i,\lambda_i]\right\|_{L^2(\R^n)}
 \le C\Gamma_{\mathrm S}(u),
 \qquad 3\le n\le5,
 \label{eq:figalli-glaudo-background}
\end{equation}
and showed that linear control fails in higher dimensions.  Deng, Sun,
and Wei~\cite{DengSunWei2025} then established the sharp rates
\begin{equation}
 \left\|\nabla u-\sum_{i=1}^{\nu}
 a_i\nabla T[z_i,\lambda_i]\right\|_{L^2(\R^n)}
 \le C
 \begin{cases}
  \Gamma_{\mathrm S}(u)|\log\Gamma_{\mathrm S}(u)|^{1/2},&n=6,\\
  \Gamma_{\mathrm S}(u)^{\frac{n+2}{2(n-2)}},&n\ge7.
 \end{cases}
 \label{eq:deng-sun-wei-background}
\end{equation}
For general $1<p<n$, write
\[
 \mathcal P_{\mathrm S}(u)
 :=-\operatorname{div}(|\nabla u|^{p-2}\nabla u)
   -|u|^{p^*-2}u,
 \qquad
 \mathcal N_{\mathrm S}:=\{T[z,\lambda]:\lambda>0, z\in\R^n\}.
\]
Liu and Zhang~\cite{LiuZhang2025} proved the sharp local one-bubble
estimate: if $\inf_{T\in\mathcal N_{\mathrm S}}
 \|\nabla(u-T)\|_{L^p(\R^n)}$ is small enough,
\begin{equation}
 \inf_{T\in\mathcal N_{\mathrm S}}
 \|\nabla(u-T)\|_{L^p(\R^n)}^{\max\{1,p-1\}}
 \le C
 \|\mathcal P_{\mathrm S}(u)\|_{W^{-1,p'}(\R^n)}.
 \label{eq:sobolev-critical-stability-background}
\end{equation}
Liu and Zhang then combine this local estimate with the compactness
theorem of Mercuri and Willem \cite{MercuriWillem2010} to show that,
for any nonnegative function
\(u \in \dot{W}^{1,p}(\mathbb{R}^n)\) such that
\[
\frac{1}{2}S(n,p)^n
\le
\int_{\mathbb{R}^n} |Du|^p\,dx
\le
\frac{3}{2}S(n,p)^n,
\]
there exists \(C=C(n,p)>0\) such that
\[
\inf_{T\in\mathcal N_{\mathrm S}}\|\nabla(u-T)\|_{L^p(\mathbb{R}^n)}^{\max\{1,p-1\}}
\le
C\|\mathcal P_{\mathrm S}(u)\|_{W^{-1,p'}(\mathbb{R}^n)}.
\]

Spectral-gap estimates play a central role in the stability
results described above. Their derivation often exploits the radial
symmetry of both the reference bubble and the underlying geometry,
which allows the linearized eigenvalue problem to be reduced, by
separation of variables, to a family of ordinary differential
equations amenable to Sturm--Liouville theory.
A different approach, based on a suitable $P$-function, was developed
by Ciraolo and Gatti~\cite{CiraoloGatti2026}. Although the resulting
quantitative rate is not optimal, this method is more
robust with respect to anisotropy. More recently, Antonini, Ciraolo,
and Gatti~\cite{AntoniniCiraoloGatti2026} established classification
and global decomposition results for the anisotropic critical
$p$-Laplace equation and obtained a quantitative one-bubble critical-point stability estimate.

On the trace side, the sharp inequality and its extremals were
identified by Escobar, Nazaret and Maggi-Neumayer~\cite{Escobar1988,Nazaret2006,MN17}.  For
$u$ with nonzero trace, introduce
\[
 \delta_{\mathrm T}(u)
 :=\frac{\|\nabla u\|_{L^p(\Hhalf)}}
          {\|u\|_{L^{p_*}(\bdH)}}-\ST(n,p),
 \qquad
 d_{\mathrm T}(u)
 :=\inf_{V\in\MT}
 \frac{\|\nabla(u-V)\|_{L^p(\Hhalf)}}
      {\|\nabla u\|_{L^p(\Hhalf)}}.
\]
When $p=2$, Ho~\cite{Ho2022} proved the trace analogue of
the Bianchi--Egnell theorem,
\begin{equation}
 \delta_{\mathrm T}(u)
 \ge c(n)d_{\mathrm T}(u)^2.
 \label{eq:ho-trace-stability-background}
\end{equation}
More recently, Zhang, Zhou, and Zou~\cite{ZhangZhouZou2025} established
the fractional trace estimate
\begin{equation}
 \begin{split}
 C_{\mathrm{BE}}(n,m,\alpha)
 \inf_{v\in\mathcal M_{n,m,\alpha}}
 \|f-v\|_{D_\alpha(\R^n)}^2
 &\le \|f\|_{D_\alpha(\R^n)}^2\\
 &\quad-S(n,m,\alpha)
 \|\tau_m f\|_{L^q(\R^{n-m})}^2,
 \end{split}
 \label{eq:zzz-functional-trace-background}
\end{equation}
where $0\le m<n$, $m/2<\alpha<n/2$, and
$q=2(n-m)/(n-2\alpha)$. A key ingredient is the reduction principle in
\cite[Theorem~3.3]{ZhangZhouZou2025}, which reduces the
stability of the fractional trace inequality to a stability estimate for the fractional
Sobolev inequality. They also obtained a sharp
quantitative profile decomposition for the Escobar equation.  In weak
form, its residual is
\[
 \Gamma_{\mathrm E}(u)
 :=\sup_{\|\nabla\varphi\|_{L^2(\Hhalf)}\le1}
 \left|
 \int_{\Hhalf}\nabla u\cdot\nabla\varphi\,dx
 -\int_{\bdH}|u|^{\frac{2}{n-2}}u\varphi\,dy
 \right|.
\]
If $d_{\mathrm E}^\nu(u)$ denotes the gradient distance to a sum of
$\nu$ weakly interacting Escobar bubbles, their estimate states
\begin{equation}
 d_{\mathrm E}^\nu(u)\le C(n,\nu)\Gamma_{\mathrm E}(u),
 \qquad
 \begin{cases}
  \nu=1,&n\ge3,\\
  \nu\ge2,&n=3.
 \end{cases}
 \label{eq:zzz-critical-trace-background}
\end{equation}

For the classical trace inequality and every $1<p<n$, Ma, Zhang and
the second author~\cite{TraceGradientStability} recently proved the sharp gradient
stability estimate
\begin{equation}
 \delta_{\mathrm T}(u)
 \ge c(n,p)d_{\mathrm T}(u)^{\max\{2,p\}}.
 \label{eq:mzz-trace-gradient-background}
\end{equation}
A central part of their proof is the spectral gap estimate, since the lack of radial symmetry of weight and domain creates new spectral
difficulties that prevent a direct adaptation of the Sobolev argument.
Moreover, the second author~\cite{Zhou2024} classified all positive finite-energy
solutions of the associated critical trace equation.

\subsection{Problem setup and main results}
The purpose of this paper is to establish the sharp quantitative one-bubble critical-point stability theorem, which extends the whole-space theorem of
\cite{LiuZhang2025} to the trace setting, and the \(p=2\) trace
theorem of \cite{ZhangZhouZou2025} to the full range \(1<p<n\).

Recall that the sharp trace inequality~\cite{Escobar1988,Nazaret2006} states
\begin{equation}
 \ST(n,p)\|u\|_{L^{p_*}(\bdH)}
 \le \|\nabla u\|_{L^p(\Hhalf)},
 \qquad u\in\Dp(\Hhalf).
 \label{eq:sharp-trace-cp}
\end{equation}
Write
\[
 U(y,t):=\bigl(|y|^2+(t+1)^2\bigr)^{-\frac{n-p}{2(p-1)}}
\]
and, for $\lambda>0$ and $\xi\in\R^{n-1}$, set
\begin{equation}
 (G_{\lambda,\xi}f)(y,t)
 :=\lambda^{-\frac{n-p}{p}}
 f\left(\frac{y-\xi}{\lambda},\frac{t}{\lambda}\right),
 \qquad
 U[\lambda,\xi]:=G_{\lambda,\xi}U.
 \label{eq:trace-dislocation}
\end{equation}
The maps $G_{\lambda,\xi}$ are isometries both on
$\Dp(\Hhalf)$ and on $L^{p_*}(\bdH)$.  The classification of the trace
extremals
\cite{Nazaret2006,MN17} gives the full extremal manifold and its positive component
\begin{equation}
 \begin{aligned}
 \MT
 &:=\{aU[\lambda,\xi]:a\ne0,\ \lambda>0,\ \xi\in\R^{n-1}\},\\
 \MT^+
 &:=\{aU[\lambda,\xi]:a>0,\ \lambda>0,\ \xi\in\R^{n-1}\}.
 \end{aligned}
 \label{eq:positive-trace-manifold-cp}
\end{equation}
For $v\in\MT^+$ set
\begin{equation}
 \Lambda_v
 :=\ST^p\|v\|_{L^{p_*}(\bdH)}^{p-p_*}.
 \label{eq:Lambda-v-cp}
\end{equation}
Then
\begin{equation}
 \int_{\Hhalf}|\nabla v|^{p-2}\nabla v\cdot\nabla\psi\,dx
 =\Lambda_v\int_{\bdH}v^{p_*-1}\psi\,dy
 \label{eq:EL-v-cp}
\end{equation}
for every $\psi\in\Dp(\Hhalf)$.

Put
$\Lambda_U:=\ST^p\|U\|_{L^{p_*}(\bdH)}^{p-p_*}$ and
\[
 a_{\mathrm T}:=\Lambda_U^{1/(p_*-p)},
 \qquad
 W[\lambda,\xi]:=a_{\mathrm T}U[\lambda,\xi].
\]
Thus $\Lambda_{W[\lambda,\xi]}=1$ and
\begin{equation}
 A_{\mathrm T}
 :=\|W[\lambda,\xi]\|_{L^{p_*}(\bdH)}^{p_*}
 =\|\nabla W[\lambda,\xi]\|_{L^{p}(\Hhalf)}^p
 =\ST^{\frac{(n-1)p}{p-1}},
 \qquad
 E_{\mathrm T}:=\frac{p-1}{p(n-1)}A_{\mathrm T}.
 \label{eq:trace-bubble-energy}
\end{equation}
Moreover, the normalized solution manifold
\begin{equation}
 \NT:=\{W[\lambda,\xi]:\lambda>0,\ \xi\in\R^{n-1}\}
 \label{eq:normalized-solution-manifold}
\end{equation}
is precisely the positive trace-bubble family solving
\begin{equation}
 \begin{cases}
  \operatorname{div}(|\nabla W|^{p-2}\nabla W)=0&\text{in }\Hhalf,\\
  |\nabla W|^{p-2}\nabla W\cdot\nu=W^{p_*-1}&\text{on }\bdH.
 \end{cases}
 \label{eq:normalized-trace-equation}
\end{equation}

We denote by $W^{-1,p'}(\Hhalf)$ the dual space of $\Dp(\Hhalf)$.
Define the Euler--Lagrange residual
$\mathcal P_{\mathrm T}(u)\in W^{-1,p'}(\Hhalf)$ by
\begin{equation}
 \begin{split}
 \langle\mathcal P_{\mathrm T}(u),\psi\rangle
 &:={}
 \int_{\Hhalf}|\nabla u|^{p-2}\nabla u\cdot\nabla\psi\,dx\\
 &\quad-
 \int_{\bdH}|u|^{p_*-2}u\,\psi\,dy,
 \end{split}
 \label{eq:trace-residual-definition}
\end{equation}
and
\begin{equation}
 \|\mathcal P_{\mathrm T}(u)\|_{W^{-1,p'}(\Hhalf)}
 :=\sup_{\substack{\psi\in\Dp(\Hhalf)\\
                   \|\nabla\psi\|_{L^p(\Hhalf)}\le1}}
 |\langle\mathcal P_{\mathrm T}(u),\psi\rangle|.
 \label{eq:trace-residual-norm}
\end{equation}
The boundary integral in \eqref{eq:trace-residual-definition} is
well defined by \eqref{eq:sharp-trace-cp}.
We write
\[
 \dist_{\Dp}(u,\MT)
 :=\inf_{z\in\MT}\|\nabla(u-z)\|_{L^p(\Hhalf)}.
\]
Similarly, set
\[
 \dist_{\Dp}(u,\NT)
 :=\inf_{W\in\NT}\|\nabla(u-W)\|_{L^p(\Hhalf)}.
\]

Following ideas of Liu and Zhang in
\cite{LiuZhang2025} and using the spectral gap estimate in
\cite[Corollary~2.6]{TraceGradientStability}, we derive the following
optimal quantitative stability estimate.
\begin{theorem}[Local one-bubble critical-point stability]
\label{thm:trace-critical-stability}
There exist $\delta=\delta(n,p)>0$ and $C=C(n,p)>0$ with the
following property.  Suppose that $u\in\Dp(\Hhalf)$ and
\begin{equation}
 \inf_{W\in\NT}
 \|\nabla(u-W)\|_{L^p(\Hhalf)}\le\delta.
 \label{eq:one-bubble-closeness}
\end{equation}
Then there exists $v\in\NT$ such that, with $h:=u-v$,
\begin{equation}
 \|\nabla h\|_{L^p(\Hhalf)}^{\max\{1,p-1\}}
 \le
 C\|\mathcal P_{\mathrm T}(u)\|_{W^{-1,p'}(\Hhalf)}.
 \label{eq:critical-point-stability-main}
\end{equation}
Consequently,
\begin{equation}
 \dist_{\Dp}(u,\NT)^{\max\{1,p-1\}}
 \le C\|\mathcal P_{\mathrm T}(u)\|_{W^{-1,p'}(\Hhalf)}.
 \label{eq:critical-point-distance-form}
\end{equation}
The exponent $\max\{1,p-1\}$ is optimal.
\end{theorem}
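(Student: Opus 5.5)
We follow the strategy of Liu and Zhang~\cite{LiuZhang2025}, replacing the Sobolev spectral gap with the trace spectral gap of \cite[Corollary~2.6]{TraceGradientStability}.

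\emph{Step 1: choice of $v$.} Given \eqref{eq:one-bubble-closeness}, we choose $v=W[\lambda,\xi]\in\NT$ minimizing $\|\nabla(u-W)\|_{L^p(\Hhalf)}$ over $\NT$. A minimizer exists for $\delta$ small: the functional is continuous in $(\lambda,\xi)$, and it tends to something bounded below by a fixed positive constant as $\lambda\to0,\infty$ or $|\xi|\to\infty$, since then $W[\lambda,\xi]\rightharpoonup0$. Then $h=u-v$ satisfies $\|\nabla h\|_p\le\delta$. By the invariance of the problem under the isometries $G_{\lambda,\xi}$ we may assume $v=W=W[1,0]$. Minimality gives the first-order orthogonality conditions
\[
\int_{\Hhalf}|\nabla h|^{p-2}\nabla h\cdot\nabla \partial_\theta W\,dx=0
\]
for $\theta\in\{\lambda,\xi_1,\dots,\xi_{n-1}\}$. (For $p\ne2$ these are nonlinear in $h$.) Because $\NT$ carries the fixed normalization $\Lambda=1$, the scaling direction $W$ is not among the minimized directions. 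The $W$-component of $h$ will instead be handled by testing the residual with $W$ itself. The residual kills $W$ to first order only through the nondegeneracy of the boundary term $\int_{\bdH}(p_*-1)W^{p_*-1}h$ versus $\int|\nabla W|^{p-2}\nabla W\cdot\nabla h$, and the difference $(p_*-p)\int_{\bdH} W^{p_*-1}h$ is nonzero.

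\emph{Step 2: testing the residual.} We test $\mathcal P_{\mathrm T}(u)$ against $\psi=h$ and subtract the equation \eqref{eq:EL-v-cp} for $W$:
\[
\langle\mathcal P_{\mathrm T}(u),h\rangle=\int_{\Hhalf}\bigl(|\nabla(W+h)|^{p-2}\nabla(W+h)-|\nabla W|^{p-2}\nabla W\bigr)\cdot\nabla h-\int_{\bdH}\bigl(|W+h|^{p_*-2}(W+h)-W^{p_*-1}\bigr)h .
\]
Using the pointwise vector inequalities of Figalli--Zhang type, the gradient term is bounded below by
\[
(1-\epsilon)\int_{\Hhalf}\Bigl(|\nabla W|^{p-2}|\nabla h|^2+(p-2)|\nabla W|^{p-2}(\hat e\cdot\nabla h)^2\Bigr)+c(\epsilon)\int_{\Hhalf}\min\{|\nabla h|^p,\ |\nabla W|^{p-2}|\nabla h|^2\}\ (\text{or }|\nabla h|^p\text{ when }p\ge2),
\]
where $\hat e=\nabla W/|\nabla W|$. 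The boundary term is bounded above by
\[
(p_*-1)\int_{\bdH}W^{p_*-2}h^2+C\int_{\bdH}\bigl(|h|^{p_*}+W^{p_*-2}|h|^2\min\{1,|h|/W\}^{\gamma}\bigr).
\]
The perturbed spectral gap \cite[Corollary~2.6]{TraceGradientStability} then applies, after decomposing $h=\alpha W+h^\perp$ with $h^\perp$ satisfying the (approximate) orthogonality conditions of Step 1. It states that the quadratic weighted form, with $|\nabla W|^{p-2}$ replaced by $|\nabla W|^{p-2}$ plus the small-$h$ correction, dominates $(p_*-1+\kappa)\int_{\bdH}W^{p_*-2}(h^\perp)^2$. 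The $\alpha$-component is controlled separately via $\langle\mathcal P_{\mathrm T}(u),W\rangle$, which, because of the computation noted in Step 1, gives $|\alpha|\lesssim\|\mathcal P_{\mathrm T}(u)\|+\|\nabla h\|^{\min\{2,p\}}$-type errors. Combining these yields
\[
c\,\Phi(h)\le\langle\mathcal P_{\mathrm T}(u),h\rangle\le\|\mathcal P_{\mathrm T}(u)\|_{W^{-1,p'}}\|\nabla h\|_p,
\]
where $\Phi(h)\gtrsim\|\nabla h\|_p^p$ for $p\ge2$ and $\Phi(h)\gtrsim\|\nabla h\|_p^2$ for $p<2$, once $\delta$ is small. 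The case $p<2$ requires the Figalli--Zhang-type estimate $\int|\nabla W|^{p-2}|\nabla h|^2\ge\int\min\{\cdot\}\gtrsim\|\nabla h\|_p^2$ for small $h$, obtained via Hölder's inequality on the region where $|\nabla h|\le|\nabla W|$. Dividing gives $\|\nabla h\|^{\max\{1,p-1\}}\le C\|\mathcal P_{\mathrm T}(u)\|$, and \eqref{eq:critical-point-distance-form} follows immediately.

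\emph{Step 3: main obstacle and optimality.} The hardest step is absorbing the higher-order boundary remainders and the orthogonality defects into the spectral-gap constant $\kappa$ uniformly in small $\delta$. In particular, the nonlinear orthogonality conditions have to be converted into the linear orthogonality required by \cite[Corollary~2.6]{TraceGradientStability}. We would do this as Liu--Zhang do, by showing that the defect is $o(\|\nabla h\|)$ in the weighted norm. For optimality, we test with $u=W+\epsilon\varphi$, where $\varphi$ is a smooth compactly supported function in $\overline{\Hhalf}$ away from the boundary and from the critical set of $W$, for instance with support where $|\nabla W|\sim1$. Then $\|\nabla h\|\sim\epsilon$ while $\|\mathcal P_{\mathrm T}(u)\|\sim\epsilon$ when $p\le2$. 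When $p>2$ we instead take $\varphi$ supported near infinity, where $|\nabla W|\ll\epsilon$, rescaled so that the residual behaves like $\epsilon^{p-1}$. This shows that the exponent $\max\{1,p-1\}$ cannot be improved.
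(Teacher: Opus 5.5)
Your overall architecture matches the paper's. You test the residual by $h$, cancel the linear terms with the bubble equation, and control the amplitude separately by testing with $W$ (the paper tests with $u$). The coercivity step, however, does not work as you set it up. The pointwise bound you invoke, $\mathscr M_p(X,Y)\ge(1-\epsilon)\bigl(|X|^{p-2}|Y|^2+(p-2)|X|^{p-4}(X\cdot Y)^2\bigr)+c(\epsilon)\min\{|Y|^p,|X|^{p-2}|Y|^2\}$ (or $+c(\epsilon)|Y|^p$), freezes the weight at $X=\nabla W$. It is false as soon as $|Y|\gtrsim|X|$. In one dimension, $\mathscr M_p(1,-1)=1<(1-\epsilon)(p-1)$ for $p>2$ and small $\epsilon$. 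For $p<2$ and $|Y|\gg|X|$, one has $\mathscr M_p\lesssim|Y|^p\ll|X|^{p-2}|Y|^2$. So $\mathcal H_W[h]$ does not dominate the linearized form. Moreover, \cite[Corollary~2.6]{TraceGradientStability} is only the \emph{linear} gap of Proposition~\ref{prop:linear-trace-gap-cp}, not a perturbed one, and cannot be applied to $h$ itself.

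The boundary side fails too when $p_*<2$. The weight $W^{p_*-2}$ grows at infinity on $\bdH$, so $\int_{\bdH}W^{p_*-2}h^2\,dy$ can be infinite for $h\in\Dp(\Hhalf)$. Your boundary upper bound is true, but no lower bound on $\mathcal H_W[h]$ can match it.

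The missing ingredient is the disturbed gap of Proposition~\ref{prop:disturbed-trace-gap-cp}. It bounds the exact remainder $\mathcal H_v[h]$ from below against an $h$-dependent boundary weight: $\frac{(v+C_0|h|)^{p_*}}{v^2+h^2}|h|^2$ for $p_*\le2$, and $v^{p_*-2}|h|^2$ for $p_*>2$. These are exactly the weights produced by the scalar bounds on $\mathscr N_{p_*}$ in Lemma~\ref{lem:pointwise-critical-remainders}. The paper proves it by contradiction:
\begin{enumerate}
\item normalize by $\varepsilon_k^2=\int(|\nabla U|+|\nabla h_k|)^{p-2}|\nabla h_k|^2$;
\item pass the nonlinear boundary term to the limit using the compactness lemma (or the compact weighted trace embedding) of \cite{TraceGradientStability};
\item obtain $\liminf\varepsilon_k^{-2}\mathcal H_U[h_k]\ge\int\mathcal A_U\nabla\phi\cdot\nabla\phi$ from a Taylor expansion on $\{\varepsilon_k^{1/4}|\nabla\phi_k|\le|\nabla U|\}$ plus lower semicontinuity;
\item check that $\phi\ne0$ and contradict the linear gap, which is only ever applied to the limit $\phi$.
\end{enumerate}

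Your choice of $v$ is a second, independent problem. Minimizing $\|\nabla(u-W)\|_p$ over $\NT$ gives conditions that are homogeneous of degree $p-1$ in $h$. So for $p\ne2$, the $L^2(\bdH,W^{p_*-2}dy)$-projection of $h$ onto $T_W\NT$ is generically of the same order as $h$, not $o(\|\nabla h\|_p)$ as you hope. The residual does not see tangential directions at first order, since every $Z\in T_W\NT$ solves the linearized problem. Hence such components cannot be recovered from $\|\PT(u)\|$ afterwards.

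The paper removes them exactly. The modulation Lemma~\ref{lem:critical-modulation} gives the linear orthogonality $\int_{\bdH}v_0^{p_*-2}h_0Z\,dy=0$. The amplitude correction $\widetilde v=av_0$ then gives $h\perp T_{\widetilde v}\MT$, so the bulk linear term vanishes exactly and \eqref{eq:critical-master-identity} holds. Finally, $|a-1|\le CR$ is recovered by testing the residual with $u$.
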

\begin{remark}
  As in \cite{LiuZhang2025}, the
sharpness of the exponent $1$ when $p<2$ in Theorem \ref{thm:trace-critical-stability} follows from a smooth perturbation
of $v$, and that of the exponent $p-1$ when $p\ge 2$ follows similarly as the one in \cite{ZhouZouCKN}.
\end{remark}

Arguing as in \cite{MercuriWillem2010}, we next prove the following Struwe-type global compactness theorem that removes the
one-bubble closeness assumption at the qualitative level.  For
$s\in\R$ write $s_-:=\max\{-s,0\}$, and define
\begin{equation}
 J_+(u)
 :=\frac1p\int_{\Hhalf}|\nabla u|^p\,dx
 -\frac1{p_*}\int_{\bdH}(u_+)^{p_*}\,dy.
 \label{eq:positive-functional}
\end{equation}

\begin{theorem}[Global compactness on the half-space]
\label{thm:global-compactness}
Let $(u_k)\subset\Dp(\Hhalf)$ be bounded and suppose that
\begin{equation}
 \|\PT(u_k)\|_{W^{-1,p'}(\Hhalf)}\longrightarrow0,
 \qquad
 \|(u_k)_-\|_{L^{p_*}(\bdH)}\longrightarrow0.
 \label{eq:global-PS-assumptions}
\end{equation}
Then, after passing to a subsequence, there are an integer $m\ge0$ and
parameters $\lambda_j^{(k)}>0$, $\xi_j^{(k)}\in\R^{n-1}$,
$1\le j\le m$, such that
\begin{equation}
 \left\|\nabla\left(
 u_k-\sum_{j=1}^mW[\lambda_j^{(k)},\xi_j^{(k)}]
 \right)\right\|_{L^p(\Hhalf)}\longrightarrow0.
 \label{eq:global-decomposition}
\end{equation}
For $i\ne j$ the parameters are asymptotically orthogonal:
\begin{equation}
 \frac{\lambda_i^{(k)}}{\lambda_j^{(k)}}
 +\frac{\lambda_j^{(k)}}{\lambda_i^{(k)}}
 +\frac{|\xi_i^{(k)}-\xi_j^{(k)}|^2}
        {\lambda_i^{(k)}\lambda_j^{(k)}}
 \longrightarrow\infty.
 \label{eq:global-parameter-orthogonality}
\end{equation}
Moreover,
\begin{align}
 \int_{\Hhalf}|\nabla u_k|^p\,dx
 &\longrightarrow mA_{\mathrm T},
 \label{eq:global-gradient-quantization}\\
 J_+(u_k)&\longrightarrow mE_{\mathrm T}.
 \label{eq:global-energy-quantization}
\end{align}
\end{theorem}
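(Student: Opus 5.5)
The plan is to run the Struwe/Mercuri--Willem iteration on the half-space. The profiles are extracted with the trace dislocations $G_{\lambda,\xi}$, and the nonnegativity information $\|(u_k)_-\|_{L^{p_*}(\bdH)}\to0$ is used to identify every nontrivial profile with an element of $\NT$.

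\emph{Step 1 (a.e. convergence of gradients).} First I would show that a bounded sequence with $\PT(u_k)\to0$ in $W^{-1,p'}(\Hhalf)$ has, up to a subsequence, $u_k\rightharpoonup u$ weakly, $\nabla u_k\to\nabla u$ a.e. in $\Hhalf$, and $u_k\to u$ a.e. on $\bdH$. I would follow the Boccardo--Murat / Mercuri--Willem truncation argument. Test $\PT(u_k)$ against $\phi\,T_\varepsilon(u_k-u)$ with a cutoff $\phi$, including cutoffs touching $\bdH$. The boundary term is handled by local compactness of the trace $W^{1,p}_{\mathrm{loc}}(\overline{\Hhalf})\hookrightarrow L^q_{\mathrm{loc}}(\bdH)$ for $q<p_*$. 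This gives $|\nabla u_k|^{p-2}\nabla u_k\rightharpoonup|\nabla u|^{p-2}\nabla u$ in $L^{p'}$, so $\PT(u)=0$.

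Next, Br\'ezis--Lieb on $\Hhalf$ and on $\bdH$ shows that $v_k:=u_k-u$ satisfies $\PT(v_k)\to0$ and $\|\nabla u_k\|_p^p=\|\nabla u\|_p^p+\|\nabla v_k\|_p^p+o(1)$. The boundary nonlinearity splits in $W^{-1,p'}$ by the standard $|a+b|^{q-2}(a+b)-|a|^{q-2}a-|b|^{q-2}b$ estimate. The gradient term splits by the analogous estimate for $|\xi|^{p-2}\xi$ together with a.e. convergence of gradients, as in Mercuri--Willem. The same splittings give $J_+(u_k)=J_+(u)+J_+(v_k)+o(1)$. Finally, $u_-=0$ on $\bdH$ by Fatou.

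\emph{Step 2 (identifying the weak limit and nonvanishing).} Since $\PT(u)=0$ and $u\ge0$ on $\bdH$, testing with $u_-$ (extended by $\min(u,0)$) gives
\[
\|\nabla u_-\|_p^p=\int_{\bdH}|u|^{p_*-2}u\,(-u_-)\,dy=0 ,
\]
so $u\ge0$. By the classification of \cite{Zhou2024}, either $u=0$ or $u\in\NT$, the latter with energy $A_{\mathrm T}$.

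For the remainder $v_k$, suppose $\|\nabla v_k\|_p\not\to0$. Testing $\PT(v_k)\to0$ with $v_k$ gives $\|\nabla v_k\|_p^p-\|v_k\|_{L^{p_*}(\bdH)}^{p_*}\to0$. Combined with \eqref{eq:sharp-trace-cp}, this yields $\liminf\|\nabla v_k\|_p^p\ge A_{\mathrm T}$.

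\emph{Step 3 (concentration and rescaling).} I would run a Lions-type concentration argument on the boundary. Choose $\lambda_k,\xi_k$ so that $\sup_{\xi}\int_{B_{\lambda_k}(\xi)\cap\bdH}|v_k|^{p_*}$ equals a small fixed constant $\varepsilon_0$ attained at $\xi_k$, with $\varepsilon_0$ below the $\varepsilon$-regularity threshold. Rescale to $\tilde v_k:=G_{\lambda_k,\xi_k}^{-1}v_k$.

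If $\tilde v_k\rightharpoonup0$, a cutoff argument on unit half-balls contradicts the choice of $\varepsilon_0$. Testing $\PT$ with $\phi^p\tilde v_k$ and using a local trace Sobolev inequality gives $\int_{B_1\cap\bdH}|\phi\tilde v_k|^{p_*}\to0$. Here the local compactness kills the lower-order terms and the smallness of $\varepsilon_0$ absorbs the critical term. Hence $\tilde v_k\rightharpoonup w\ne0$, and by invariance and Step 1, $\PT(w)=0$. The negative part condition is invariant under $G$, so Step 2 gives $w\in\NT$.

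Iterating with $v_k-G_{\lambda_k,\xi_k}w$, each step removes energy $A_{\mathrm T}$, so the process stops after finitely many steps. This yields \eqref{eq:global-decomposition}, \eqref{eq:global-gradient-quantization} and \eqref{eq:global-energy-quantization}, using $J_+(W)=E_{\mathrm T}$. The weak limit $u$ itself is absorbed as the profile with $\lambda\equiv$ const. Orthogonality \eqref{eq:global-parameter-orthogonality} follows from the construction: each new rescaled remainder converges weakly to zero in the frames of the previous profiles, which forces the parameters to diverge relative to each other.

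\emph{Main obstacle.} The delicate step is Step 1 near $\bdH$. The a.e. gradient convergence for $p\neq2$ must be carried out with cutoffs meeting the boundary and a critical boundary nonlinearity. There I would first try restricting to compact sets where the boundary concentration measure of $|u_k|^{p_*}$ has no atoms. On such sets the trace term converges strongly, and the remaining countably many atoms form a null set, which does not affect a.e. convergence.
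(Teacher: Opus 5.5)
Your proposal is correct and follows the same Mercuri--Willem scheme as the paper. The shared steps are:
\begin{itemize}
\item truncated test functions for a.e.\ convergence of gradients;
\item Br\'ezis--Lieb splittings of norm, energy and residual;
\item a L\'evy concentration function on $\bdH$ normalized to a small mass, with the localized absorption argument ruling out a zero weak limit;
\item classification via \cite{Zhou2024}, after the strong maximum principle upgrades nonnegative to positive;
\item termination by the gap $A_{\mathrm T}$, and orthogonality by induction on frames.
\end{itemize}

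The real difference is how the sign is handled. The paper uses \eqref{eq:positive-signed-comparison} once to replace $\PT$ by the residual $P_+$ of $J_+$. After that, every critical point of $P_+$ is automatically nonnegative (test with $-V_-$). Every remainder satisfies $\|\nabla(r_k)_-\|_p\to0$, by testing $P_+(r_k)$ with $-(r_k)_-$. So the hypothesis on $(u_k)_-$ never has to be carried through the iteration.

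You keep the signed $\PT$, which lets you use the standard odd-nonlinearity splitting. The cost is that nonnegativity of each new profile $w$ (via Fatou) needs $\|(v_k)_-\|_{L^{p_*}(\bdH)}\to0$ for the \emph{current} remainder. $G$-invariance alone does not give this; you must also check that subtracting a nonnegative profile preserves it. This is easy to supply. Pointwise, $(a-b)_-\le a_-+(b-a_+)_+$. In the rescaled frame, $0\le(w-(\tilde v_k)_+)_+\le w$, and this tends to $0$ a.e.\ on $\bdH$ because $w\ge0$. Dominated convergence with $w^{p_*}\in L^1(\bdH)$ then gives $\|(\tilde v_k-w)_-\|_{L^{p_*}(\bdH)}\to0$. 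The same argument, with $u$ in place of $w$, handles the first remainder $u_k-u$.

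The ``main obstacle'' you flag is not actually an obstacle. The test function $\phi\,T_\varepsilon(u_k-u)$ is uniformly bounded, compactly supported, and tends to $0$ a.e.\ on $\bdH$. Hence it tends to $0$ strongly in $L^{p_*}(\bdH)$, while $|u_k|^{p_*-2}u_k$ stays bounded in $L^{p_*'}(\bdH)$. The boundary term therefore vanishes without any analysis of atoms of a concentration measure; this is exactly how Lemma~\ref{lem:local-compactness} handles it.
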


As a corollary of
Theorem~\ref{thm:global-compactness}, we obtain the trace counterpart of
\cite[Theorem~1.1]{LiuZhang2025}.

\begin{corollary}
\label{thm:trace-nu-bubble-compactness}
Let $\nu\ge1$ be an integer, and let
$(u_k)\subset\Dp(\Hhalf)$ be a sequence of nonnegative functions such
that
\begin{equation}
 \left(\nu-\frac12\right)A_{\mathrm T}
 \le\int_{\Hhalf}|\nabla u_k|^p\,dx
 \le\left(\nu+\frac12\right)A_{\mathrm T}
 \label{eq:trace-nu-energy-window}
\end{equation}
and
\[
 \|\PT(u_k)\|_{W^{-1,p'}(\Hhalf)}\longrightarrow0.
\]
Then there are $\nu$-tuples of parameters
$\lambda_j^{(k)}>0$ and $\xi_j^{(k)}\in\R^{n-1}$ such that
\begin{equation}
 \left\|\nabla\left(
 u_k-\sum_{j=1}^{\nu}
 W[\lambda_j^{(k)},\xi_j^{(k)}]
 \right)\right\|_{L^p(\Hhalf)}\longrightarrow0.
 \label{eq:trace-nu-bubble-decomposition}
\end{equation}
\end{corollary}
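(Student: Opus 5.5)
The plan is to apply Theorem~\ref{thm:global-compactness} to $(u_k)$ and show that the number of bubbles equals $\nu$.

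\textbf{Checking the hypotheses.} The sequence is bounded in $\Dp(\Hhalf)$ by the upper bound in \eqref{eq:trace-nu-energy-window}. The residual $\|\PT(u_k)\|_{W^{-1,p'}(\Hhalf)}$ tends to zero by assumption. Since each $u_k\ge0$, its trace is also nonnegative a.e.\ on $\bdH$. Indeed, the trace operator is continuous, commutes with the positive-part map on $\Dp(\Hhalf)$, and preserves nonnegativity via approximation by smooth functions. Hence $(u_k)_-=0$ on $\bdH$, so $\|(u_k)_-\|_{L^{p_*}(\bdH)}=0$ for every $k$. All hypotheses in \eqref{eq:global-PS-assumptions} therefore hold.

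\textbf{Applying the theorem.} After passing to a subsequence, Theorem~\ref{thm:global-compactness} gives an integer $m\ge0$ and parameters $\lambda_j^{(k)},\xi_j^{(k)}$ such that \eqref{eq:global-decomposition} holds. By \eqref{eq:global-gradient-quantization}, we also have $\int_{\Hhalf}|\nabla u_k|^p\,dx\to mA_{\mathrm T}$. Passing to the limit in \eqref{eq:trace-nu-energy-window} gives
\[
(\nu-\tfrac12)A_{\mathrm T}\le mA_{\mathrm T}\le(\nu+\tfrac12)A_{\mathrm T}.
\]
Since $A_{\mathrm T}>0$ and $m$ is an integer, this forces $m=\nu$. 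So \eqref{eq:trace-nu-bubble-decomposition} holds along the subsequence.

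\textbf{Removing the subsequence.} This is the only delicate point. I would argue by contradiction. Suppose the conclusion fails for the full sequence. Then there are $\varepsilon>0$ and a subsequence along which
\[
\inf_{\lambda_j>0,\ \xi_j\in\R^{n-1}}\Bigl\|\nabla\Bigl(u_k-\sum_{j=1}^\nu W[\lambda_j,\xi_j]\Bigr)\Bigr\|_{L^p(\Hhalf)}\ge\varepsilon .
\]
This subsequence still satisfies every hypothesis of the corollary. Running the argument above on it yields a further subsequence with $m=\nu$ and decomposition error tending to zero, which is a contradiction.

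Consequently, for every $k$ we can choose a $\nu$-tuple of parameters that nearly attains the infimum, say within $1/k$. With these choices \eqref{eq:trace-nu-bubble-decomposition} holds for the whole sequence.

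I expect no real obstacle: the substance lies in Theorem~\ref{thm:global-compactness}. The only points needing care are the nonnegativity of the trace and the subsequence bookkeeping described above.
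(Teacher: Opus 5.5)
Your proposal is correct and matches the paper's route: the paper just says that Theorem~\ref{thm:global-compactness} yields the corollary. The ingredients are the same as yours: nonnegativity makes the boundary negative part vanish, the energy window gives boundedness, and the gradient quantization $\int|\nabla u_k|^p\,dx\to mA_{\mathrm T}$ forces $m=\nu$. Your extra step of removing the subsequence, by showing that $\inf_{\lambda_j,\xi_j}\|\nabla(u_k-\sum_{j=1}^{\nu}W[\lambda_j,\xi_j])\|_{L^p}\to0$ along the whole sequence, is valid bookkeeping that the paper leaves implicit.
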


Combining the case $\nu=1$ with
Theorem~\ref{thm:trace-critical-stability} gives the trace analogue of
\cite[Corollary~1.3]{LiuZhang2025}.

\begin{corollary}[Global one-bubble critical-point stability]
\label{cor:global-one-bubble-critical-stability}
Let $u\in\Dp(\Hhalf)$ be nonnegative and satisfy
\begin{equation}
 \frac12A_{\mathrm T}
 \le\int_{\Hhalf}|\nabla u|^p\,dx
 \le\frac32A_{\mathrm T}.
 \label{eq:global-one-bubble-energy-window}
\end{equation}
Then there exist $v\in\NT$ and $C=C(n,p)>0$ such that
\begin{equation}
 \|\nabla(u-v)\|_{L^p(\Hhalf)}^{\max\{1,p-1\}}
 \le C\|\PT(u)\|_{W^{-1,p'}(\Hhalf)}.
 \label{eq:global-one-bubble-critical-stability}
\end{equation}
\end{corollary}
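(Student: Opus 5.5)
We argue by contradiction, using Corollary~\ref{thm:trace-nu-bubble-compactness} with $\nu=1$ to reduce to the local estimate of Theorem~\ref{thm:trace-critical-stability}. Let $\delta=\delta(n,p)$ be the constant of Theorem~\ref{thm:trace-critical-stability}. The proof has two regimes, according to whether $\|\PT(u)\|_{W^{-1,p'}(\Hhalf)}$ is small or not.

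\emph{Small residual.} We first claim that there is $\varepsilon_0=\varepsilon_0(n,p)>0$ such that every nonnegative $u$ satisfying \eqref{eq:global-one-bubble-energy-window} and $\|\PT(u)\|_{W^{-1,p'}(\Hhalf)}\le\varepsilon_0$ also satisfies $\dist_{\Dp}(u,\NT)\le\delta$. Suppose not. Then there are nonnegative $u_k$ in the energy window \eqref{eq:global-one-bubble-energy-window} with $\|\PT(u_k)\|\to0$ and $\dist_{\Dp}(u_k,\NT)>\delta$ for all $k$. This is exactly \eqref{eq:trace-nu-energy-window} with $\nu=1$. Corollary~\ref{thm:trace-nu-bubble-compactness} then gives $\lambda^{(k)},\xi^{(k)}$ with
\[
\|\nabla(u_k-W[\lambda^{(k)},\xi^{(k)}])\|_{L^p(\Hhalf)}\to0 ,
\]
which contradicts $\dist_{\Dp}(u_k,\NT)>\delta$. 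So the claim holds. For such $u$, Theorem~\ref{thm:trace-critical-stability} directly yields $v\in\NT$ satisfying \eqref{eq:global-one-bubble-critical-stability} with its constant $C$.

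\emph{Large residual.} If $\|\PT(u)\|_{W^{-1,p'}(\Hhalf)}>\varepsilon_0$, take any $v\in\NT$, for instance $v=W[1,0]$. Since $\|\nabla v\|_{L^p}^p=A_{\mathrm T}$ and $\|\nabla u\|_{L^p}^p\le\frac32A_{\mathrm T}$, we have
\[
\|\nabla(u-v)\|_{L^p}\le\left(\tfrac32\right)^{1/p}A_{\mathrm T}^{1/p}+A_{\mathrm T}^{1/p}=:M(n,p).
\]
Hence
\[
\|\nabla(u-v)\|_{L^p}^{\max\{1,p-1\}}\le M^{\max\{1,p-1\}}\le M^{\max\{1,p-1\}}\varepsilon_0^{-1}\|\PT(u)\|_{W^{-1,p'}(\Hhalf)}.
\]
Taking $C$ to be the larger of the constant from Theorem~\ref{thm:trace-critical-stability} and $M^{\max\{1,p-1\}}/\varepsilon_0$ completes the proof.

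The only substantive step is the compactness argument in the small-residual regime, and it is already supplied by Corollary~\ref{thm:trace-nu-bubble-compactness}. Two points need checking there. First, the sequence must satisfy that corollary's hypotheses, which it does since nonnegativity and the energy window are built in. Second, $\varepsilon_0$ depends only on $(n,p)$. This holds because it is obtained by contradiction over all admissible $u$, with $\delta$ fixed by Theorem~\ref{thm:trace-critical-stability}.
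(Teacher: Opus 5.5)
Your proof is correct and follows the paper's approach, which consists only of the remark that the result follows by combining Theorem~\ref{thm:trace-critical-stability} with the case $\nu=1$ of Corollary~\ref{thm:trace-nu-bubble-compactness}. Your small/large-residual dichotomy supplies exactly the details that remark leaves implicit: compactness by contradiction brings $u$ into the $\delta$-neighbourhood of $\NT$, and the trivial energy bound handles the remaining case.
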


The proof of Theorem~\ref{thm:trace-critical-stability} follows the strategy of \cite{LiuZhang2025}, but it includes
non-trivial modifications. Specifically, instead of giving a delicate lower bound of $\mathscr M_p(X,Y):=\bigl(|X+Y|^{p-2}(X+Y)-|X|^{p-2}X\bigr)\cdot Y$ and relying on
\cite[Lemma 2.1]{LiuZhang2025}, we work directly with the exact remainder $\mathscr M_p$ as in \cite{TraceGradientStability}, which simplifies the proof of the disturbed spectral gap estimate; see Proposition~\ref{prop:disturbed-trace-gap-cp}.

This paper is organized as follows.
Section~2 establishes pointwise estimates for the nonlinear remainders and uses the spectral gap in
\cite{TraceGradientStability} to prove the disturbed spectral gap needed
for Theorem~\ref{thm:trace-critical-stability}. In Section~3, we prove
the local sharp stability theorem, Theorem~\ref{thm:trace-critical-stability}.
In Section~4, following the method in \cite{MercuriWillem2010}, we prove the global compactness theorem and its two
consequences.

\section{The disturbed spectral gap for the critical-point remainder}

\subsection{Pointwise remainders and the linear trace gap}

For $X,Y\in\R^n$, $a\ge0$, $b\in\R$, and $q>1$, define
\begin{align}
 \mathscr M_p(X,Y)
 &:=
 \bigl(|X+Y|^{p-2}(X+Y)-|X|^{p-2}X\bigr)\cdot Y,
 \label{eq:M-critical-definition}\\
 \mathscr N_q(a,b)
 &:=
 \bigl(|a+b|^{q-2}(a+b)-a^{q-1}\bigr)b.
 \label{eq:N-critical-definition}
\end{align}
These are the exact bulk and boundary remainders obtained after testing
the Euler--Lagrange equation by the perturbation.

\begin{lemma}[Pointwise estimates]
\label{lem:pointwise-critical-remainders}
There are $c_p,C_p>0$ such that
\begin{equation}
 c_p(|X|+|Y|)^{p-2}|Y|^2
 \le\mathscr M_p(X,Y)
 \le C_p(|X|+|Y|)^{p-2}|Y|^2.
 \label{eq:M-monotonicity-comparison}
\end{equation}
In particular, if $1<p<2$, then
\begin{equation}
 \mathscr M_p(X,Y)
 \simeq_p\min\{|Y|^p,|X|^{p-2}|Y|^2\},
 \label{eq:M-min-form-critical}
\end{equation}
whereas, if $p\ge2$, then
\begin{equation}
 \mathscr M_p(X,Y)
 \ge c_p\bigl(|X|^{p-2}|Y|^2+|Y|^p\bigr).
 \label{eq:M-p-ge-two-critical}
\end{equation}

For every $\kappa>0$ there are constants $C_0,C_1>0$, depending only
on $q$ and $\kappa$, such that
\begin{align}
 \mathscr N_q(a,b)
 &\le(q-1+\kappa)
 \frac{(a+C_0|b|)^q}{a^2+b^2}|b|^2,
 &&1<q\le2,
 \label{eq:N-q-le-two-critical}\\
 \mathscr N_q(a,b)
 &\le(q-1+\kappa)a^{q-2}|b|^2+C_1|b|^q,
 &&q>2.
 \label{eq:N-q-gt-two-critical}
\end{align}
At $(a,b)=(0,0)$ the quotient in
\eqref{eq:N-q-le-two-critical} is understood to be zero.
\end{lemma}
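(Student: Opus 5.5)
The plan is to prove the bulk estimates through an integral representation of the difference of the vector field $F(Z):=|Z|^{p-2}Z$, and the boundary estimates through a one-dimensional analysis in the ratio $s=b/a$. Everything reduces to homogeneity plus compactness on a normalized set.

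\emph{Bulk remainder.} For $Y\neq0$ I write
\[
\mathscr M_p(X,Y)=\int_0^1 DF(X+\tau Y)[Y]\cdot Y\,d\tau,
\]
where
\[
DF(Z)[Y]\cdot Y=|Z|^{p-2}\Bigl(|Y|^2+(p-2)\frac{(Z\cdot Y)^2}{|Z|^2}\Bigr).
\]
This quantity lies between $\min\{1,p-1\}|Z|^{p-2}|Y|^2$ and $\max\{1,p-1\}|Z|^{p-2}|Y|^2$. The integral is absolutely convergent even when $p<2$: the segment meets $0$ at most once, and there $|X+\tau Y|^{p-2}$ is integrable in $\tau$. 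Hence
\[
\mathscr M_p\simeq |Y|^2\int_0^1|X+\tau Y|^{p-2}\,d\tau .
\]
The remaining step is the standard fact
\[
\int_0^1|X+\tau Y|^{p-2}\,d\tau\simeq_p(|X|+|Y|)^{p-2}.
\]
I would prove it by noting that both sides are homogeneous of degree $p-2$, normalizing $|X|+|Y|=1$, and checking continuity and positivity on this compact set; for $p<2$ the integrand is integrable uniformly on the set. This gives \eqref{eq:M-monotonicity-comparison}.

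\emph{Consequences of the bulk bound.}
\begin{itemize}
\item For $p<2$: $(|X|+|Y|)^{p-2}|Y|^2$ is comparable to $|Y|^p$ when $|Y|\ge|X|$, and to $|X|^{p-2}|Y|^2$ when $|Y|\le|X|$. In each regime the comparable quantity is the smaller of the two, which gives \eqref{eq:M-min-form-critical}.
\item For $p\ge2$: use $(|X|+|Y|)^{p-2}\ge \tfrac12\bigl(|X|^{p-2}+|Y|^{p-2}\bigr)$, up to a constant.
\end{itemize}

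\emph{Boundary remainder.} I set $g(s):=(|1+s|^{q-2}(1+s)-1)s$. By homogeneity, $\mathscr N_q(a,b)=a^q g(b/a)$ for $a>0$, and the case $a=0$ is trivial ($\mathscr N_q=|b|^q$).

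For $q>2$, the claim becomes
\[
g(s)\le (q-1+\kappa)s^2+C_1|s|^q .
\]
\begin{itemize}
\item Near $s=0$, Taylor expansion gives $g(s)=(q-1)s^2+O(|s|^3)$ (or $O(|s|^{\min(3,q)})$), so $g(s)\le(q-1+\kappa)s^2$ for $|s|\le s_0(\kappa)$.
\item For $|s|\ge s_0$, $g(s)\le C|s|^q$, since $g(s)\lesssim |s|+|s|^q\lesssim_{s_0}|s|^q$.
\end{itemize}

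For $1<q\le2$, the claim becomes
\[
g(s)\le (q-1+\kappa)\frac{(1+C_0|s|)^q}{1+s^2}\,s^2 .
\]
\begin{itemize}
\item Near $0$, the right-hand side is $(q-1+\kappa)s^2(1+O(|s|))$, which dominates $(q-1)s^2+O(|s|^3)$ for $|s|\le s_0$.
\item For $|s|\ge s_0$, the right-hand side is at least $c\,C_0^q|s|^q s^2/(1+s^2)\gtrsim C_0^q|s|^q\min\{1,s_0^2\}$.
\item Meanwhile $g(s)\le |1+s|^{q-1}|s|+|s|\lesssim |s|+|s|^q\lesssim_{s_0}|s|^q$, because $q-1\le1$ and $|s|\ge s_0$.
\end{itemize}
Choosing $C_0$ large finishes this case.

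\emph{Main obstacle.} The only delicate point is the uniform comparability of $\int_0^1|X+\tau Y|^{p-2}\,d\tau$ when $p<2$ and the segment passes near the origin. The direct check there is that this integral is at most $C\int_0^1|\tau-\tau_0|^{p-2}\,d\tau\,|Y|^{p-2}$, with $p-2>-1$ and $|Y|\gtrsim |X|+|Y|$ in that regime.
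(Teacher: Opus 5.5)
Your proposal is correct and follows essentially the same route as the paper: the bulk bounds come from the standard monotonicity estimate for $Z\mapsto|Z|^{p-2}Z$ (which the paper only cites, and which you derive via the segment integral, correctly handling the integrable singularity when $p<2$), followed by a case split on $|Y|$ versus $|X|$. The boundary bounds likewise come from the homogeneity reduction to $a=1$, Taylor expansion near $b/a=0$, and absorption for $|b/a|\ge s_0$ by enlarging $C_0$ or $C_1$; the one point worth stating explicitly is that near $s=0$ you should use $(1+C_0|s|)^q\ge1$, so that $s_0$ depends only on $q$ and $\kappa$ and not on the later choice of $C_0$.
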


\begin{proof}
Formula \eqref{eq:M-monotonicity-comparison} is the standard strict
monotonicity estimate for $X\mapsto|X|^{p-2}X$.  Splitting into
$|Y|\le |X|/2$ and $|Y|>|X|/2$ gives
\eqref{eq:M-min-form-critical} and
\eqref{eq:M-p-ge-two-critical}.

For the scalar estimates, the case $a=0$ follows directly from
$\mathscr N_q(0,b)=|b|^q$; for $a>0$, homogeneity reduces the proof to
$a=1$.
On $|b|\le b_0$, Taylor expansion gives
\[
 (|1+b|^{q-2}(1+b)-1)b=(q-1)b^2+o(b^2).
\]
On the complementary region, the right-hand sides of
\eqref{eq:N-q-le-two-critical} and
\eqref{eq:N-q-gt-two-critical} absorb the expression after increasing
$C_0$ or $C_1$.  This is the scalar argument used by Liu--Zhang
\cite[Lemma~2.2]{LiuZhang2025}, with $p^*$ replaced by the trace
exponent $q$.
\end{proof}

For $v\in\MT^+$ define
\begin{equation}
 \mathcal A_v
 :=|\nabla v|^{p-2}
 \left(I+(p-2)\frac{\nabla v}{|\nabla v|}
 \otimes\frac{\nabla v}{|\nabla v|}\right).
 \label{eq:A-v-critical}
\end{equation}
The replacement for \cite[Lemma~3.3]{LiuZhang2025} is the following
direct consequence of
\cite[Corollary~2.6]{TraceGradientStability}.

\begin{proposition}[Linear trace spectral gap]
\label{prop:linear-trace-gap-cp}
There is $\lambda_{\mathrm T}=\lambda_{\mathrm T}(n,p)>0$ such that
every $\phi\in\dot W^{1,2}(\Hhalf;|\nabla U|^{p-2})$ satisfying
\begin{equation}
 \int_{\bdH}U^{p_*-2}\phi Z\,dy=0
 \qquad\forall Z\in T_U\MT,
 \label{eq:linear-orthogonality-cp}
\end{equation}
obeys
\begin{equation}
 \int_{\Hhalf}\mathcal A_U\nabla\phi\cdot\nabla\phi\,dx
 \ge
 \left[(p_*-1)\ST^p+2\lambda_{\mathrm T}\right]
 \|U\|_{L^{p_*}(\bdH)}^{\,p-p_*}
 \int_{\bdH}U^{p_*-2}|\phi|^2\,dy.
 \label{eq:linear-trace-gap-cp}
\end{equation}
By critical dilations, tangential translations, and multiplication, the
corresponding estimate at $v\in\MT^+$ has coefficient
\[
 \left[(p_*-1)\ST^p+2\lambda_{\mathrm T}\right]
 \|v\|_{L^{p_*}(\bdH)}^{\,p-p_*}.
\]
\end{proposition}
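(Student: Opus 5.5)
The statement is a repackaging of \cite[Corollary~2.6]{TraceGradientStability}, so the proof is mainly bookkeeping plus a scaling argument.

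\emph{Step 1: the estimate at $U$.} Corollary~2.6 of \cite{TraceGradientStability} is the spectral gap for the weighted eigenvalue problem
\[
 -\operatorname{div}(\mathcal A_U\nabla\phi)=0 \ \text{in } \Hhalf,
 \qquad
 \mathcal A_U\nabla\phi\cdot\nu=\mu\,U^{p_*-2}\phi \ \text{on } \bdH .
\]
The tangent space $T_U\MT$ is spanned by $U$, $\partial_\lambda U[\lambda,\xi]|_{(1,0)}$ and $\partial_{\xi_i}U[\lambda,\xi]|_{(1,0)}$. We will:
\begin{enumerate}
\item identify the first eigenvalue, attained by $U$ itself: testing \eqref{eq:EL-v-cp} with $\psi=U$ gives $\mu_1=\Lambda_U$ with $\Lambda_U$ as in \eqref{eq:Lambda-v-cp};
\item identify the second eigenvalue $(p_*-1)\Lambda_U$, with eigenspace spanned by the $n$ derivatives of the dislocation family, obtained by differentiating the equation;
\item note that the cited corollary gives a strictly larger third eigenvalue, hence a gap $\mu_3\ge(p_*-1)\Lambda_U+2\lambda_{\mathrm T}'$.
\end{enumerate}
Writing $\Lambda_U=\ST^p\|U\|_{L^{p_*}(\bdH)}^{p-p_*}$ and renaming constants, we set $2\lambda_{\mathrm T}:=2\lambda_{\mathrm T}'\,\ST^{-p}\cdot\ST^{p}$, i.e.\ we absorb the normalization. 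The orthogonality \eqref{eq:linear-orthogonality-cp} is exactly $U^{p_*-2}$-weighted orthogonality to the first two eigenspaces. The min–max characterization (Rayleigh quotient on the orthogonal complement) then gives \eqref{eq:linear-trace-gap-cp} for smooth compactly supported $\phi$. Density in $\dot W^{1,2}(\Hhalf;|\nabla U|^{p-2})$ extends it to all such $\phi$, using that the boundary weight term is continuous by the weighted trace compactness established in \cite{TraceGradientStability}.

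\emph{Step 2: transfer to a general $v=aU[\lambda,\xi]$.} Given $\phi$ orthogonal at $v$, set $\tilde\phi:=G_{\lambda,\xi}^{-1}\phi$.
\begin{itemize}
\item Since $\nabla v=a\,(\nabla U)_{\lambda,\xi}$ with the scaling $\lambda^{-n/p}$, we have $\mathcal A_v=|a|^{p-2}\lambda^{-n(p-2)/p}\mathcal A_U\circ(\text{affine})$. A change of variables shows that the left side at $v$ equals $a^{p-2}$ times the left side at $U$ for $\tilde\phi$, with the same power of $\lambda$ appearing on both sides.
\item Similarly, $v^{p_*-2}=a^{p_*-2}(U^{p_*-2})_{\lambda,\xi}$. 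The boundary integral scales by the same power of $\lambda$, which is guaranteed by the criticality of the exponents $p_*$ and $p$; we will verify this by direct computation of the exponent of $\lambda$.
\item The tangent space transforms covariantly, $T_v\MT=G_{\lambda,\xi}T_U\MT$, so $\tilde\phi$ satisfies \eqref{eq:linear-orthogonality-cp}.
\end{itemize}
Applying Step 1 to $\tilde\phi$, the estimate at $v$ carries the factor $a^{p-2}$ on the left and $a^{p_*-2}$ on the right. Rewriting $a^{p-p_*}\|U\|_{L^{p_*}}^{p-p_*}=\|v\|_{L^{p_*}}^{p-p_*}$ yields the stated coefficient.

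\emph{Main obstacle.} The delicate point is matching normalizations exactly: confirming that Corollary~2.6 is stated with boundary weight $U^{p_*-2}$ and orthogonality in that same weighted inner product, and that its gap constant is expressed relative to $(p_*-1)\Lambda_U$. If the corollary instead uses a different normalization, for example the orthogonality conditions $\int\mathcal A_U\nabla\phi\cdot\nabla Z=0$, we will convert between the two using the eigen-equation for $Z$, which makes the two forms of orthogonality equivalent on eigenfunctions.
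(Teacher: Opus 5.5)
Your proposal follows the paper's own route. The paper gives no argument beyond invoking \cite[Corollary~2.6]{TraceGradientStability} at $U$ and transporting the estimate by $G_{\lambda,\xi}$ and multiplication, and your Step~2 computations are correct: both $\lambda$-exponents vanish by criticality, and $a^{p-2}/a^{p_*-2}=a^{p-p_*}$ combines with $\|v\|_{L^{p_*}(\bdH)}=a\|U\|_{L^{p_*}(\bdH)}$ to give the stated coefficient.

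There are two slips, neither of which affects the argument:
\begin{itemize}
\item The eigenvalue of $U$ is $(p-1)\Lambda_U$, not $\Lambda_U$. Indeed $\mathcal A_U\nabla U=(p-1)|\nabla U|^{p-2}\nabla U$, so testing with $U$ gives Rayleigh quotient $(p-1)\Lambda_U$. With your value, when $p_*<2$ your ``first'' eigenvalue $\Lambda_U$ would exceed your ``second'' one $(p_*-1)\Lambda_U$, so your labelling would be inconsistent.
\item The constant conversion should read $\lambda_{\mathrm T}=\lambda_{\mathrm T}'\|U\|_{L^{p_*}(\bdH)}^{p_*-p}$. Your factor $\ST^{-p}\ST^{p}$ equals $1$ and does not perform this conversion.
\end{itemize}
Neither slip matters because the $U$-direction is removed by the orthogonality condition. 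The only input actually used is the gap above $(p_*-1)\Lambda_U$ on the orthogonal complement.
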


\subsection{The disturbed spectral gap}

Set
\begin{equation}
 \mathcal H_v[h]
 :=\int_{\Hhalf}\mathscr M_p(\nabla v,\nabla h)\,dx.
 \label{eq:H-v-critical}
\end{equation}
Here and below, $h\perp T_v\MT$ means
\[
 \int_{\bdH}v^{p_*-2}hZ\,dy=0
 \qquad\text{for every }Z\in T_v\MT.
\]
The next proposition is the trace version of
\cite[Proposition~3.4]{LiuZhang2025}.  Its proof follows the nonlinear
compactness scheme of \cite[Section~4]{TraceGradientStability}; the
only change is that the
energy remainder there is replaced by the monotonicity remainder
\eqref{eq:M-critical-definition}.

\begin{proposition}[Disturbed trace spectral gap]
\label{prop:disturbed-trace-gap-cp}
If $p_*\le2$, then for every fixed $C_0\ge1$ there is
$\eta_0=\eta_0(n,p,C_0)>0$ such that, whenever
$v\in\MT^+$, $h\in\Dp(\Hhalf)$,
$h\perp T_v\MT$,
\begin{equation}
 \frac12\ST^{\frac{n-p}{p-1}}
 \le \|v\|_{L^{p_*}(\bdH)}
 \le \frac32\ST^{\frac{n-p}{p-1}},
 \label{eq:disturbed-gap-v-normalization}
\end{equation}
and
$\|\nabla h\|_p\le\eta_0$, one has
\begin{equation}
 \mathcal H_v[h]
 \ge
 \left[(p_*-1)\ST^p+\lambda_{\mathrm T}\right]
 \|v\|_{L^{p_*}(\bdH)}^{\,p-p_*}
 \int_{\bdH}
 \frac{(v+C_0|h|)^{p_*}}{v^2+|h|^2}|h|^2\,dy.
 \label{eq:disturbed-gap-q-le-two}
\end{equation}

If $p_*>2$, there is $\eta_1=\eta_1(n,p)>0$ such that every
$v\in\MT^+$ and $h\in\Dp(\Hhalf)$ satisfying
$h\perp T_v\MT$, \eqref{eq:disturbed-gap-v-normalization}, and
$\|\nabla h\|_p\le\eta_1$ obey
\begin{equation}
 \mathcal H_v[h]
 \ge
 \left[(p_*-1)\ST^p+\lambda_{\mathrm T}\right]
 \|v\|_{L^{p_*}(\bdH)}^{\,p-p_*}
 \int_{\bdH}v^{p_*-2}|h|^2\,dy.
 \label{eq:disturbed-gap-q-gt-two}
\end{equation}
\end{proposition}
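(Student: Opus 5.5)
I would argue by contradiction, following the nonlinear compactness scheme of \cite[Section~4]{TraceGradientStability}.

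\textbf{Reduction.} By the invariance of $\mathcal H_v$, of the boundary weights and of the orthogonality condition under $G_{\lambda,\xi}$, and by homogeneity in the amplitude (the normalization \eqref{eq:disturbed-gap-v-normalization} keeps $a$ in a compact subset of $(0,\infty)$), we may take $v=a_kU$ with $a_k\to a_\infty>0$. Suppose the claim fails. Then there are $h_k\perp T_{v_k}\MT$ with $\varepsilon_k:=\|\nabla h_k\|_p\to0$ such that $\mathcal H_{v_k}[h_k]$ is strictly smaller than the right-hand side $R_k$ of \eqref{eq:disturbed-gap-q-le-two} (resp.\ \eqref{eq:disturbed-gap-q-gt-two}). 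Set $\phi_k:=h_k/\varepsilon_k$, so that $\|\nabla\phi_k\|_p=1$. By \eqref{eq:M-monotonicity-comparison},
\[
\mathcal H_{v_k}[h_k]\simeq\int(|\nabla v_k|+|\nabla h_k|)^{p-2}|\nabla h_k|^2 .
\]
Using the trace inequality on $R_k$, both sides are bounded by $C\varepsilon_k^{\min(2,p)}$-type quantities. Normalizing by $\mathcal H_{v_k}[h_k]$, I would control $\int_{\bdH}v^{p_*-2}\phi_k^2$, or its truncated version when $p_*\le2$, by the bulk quadratic form.

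\textbf{Splitting.} Split $\Hhalf$ into the region where $|\nabla h_k|\le \gamma|\nabla v_k|$ and its complement, with $\gamma$ small. On the first region, Taylor expansion gives
\[
\mathscr M_p(\nabla v,\nabla h)\ge(1-o_\gamma(1))\,\mathcal A_v\nabla h\cdot\nabla h .
\]
On the complement, \eqref{eq:M-min-form-critical} and \eqref{eq:M-p-ge-two-critical} give the lower bounds $|\nabla h|^p$ and $|\nabla h|^p+|\nabla v|^{p-2}|\nabla h|^2$ respectively. Introduce $\tilde\phi_k$, the truncation of $\phi_k$ at level comparable to $v/\varepsilon_k$, following \cite{TraceGradientStability} and \cite{LiuZhang2025}. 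The normalized quadratic form then bounds $\int\mathcal A_U\nabla\tilde\phi_k\cdot\nabla\tilde\phi_k$, so $\tilde\phi_k$ is bounded in the weighted space $\dot W^{1,2}(\Hhalf;|\nabla U|^{p-2})$ and converges weakly to some $\phi$.

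\textbf{Compactness.} The key compactness fact is that the trace embedding into $L^2(\bdH;U^{p_*-2})$ is compact, because the weight $U^{p_*-2}$ decays at infinity. This lets us pass to the limit in the boundary term, while the bulk term is weakly lower semicontinuous. The orthogonality condition passes to the limit as $\int U^{p_*-2}\phi Z=0$, since the truncation error is small. We then have two cases.

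If $\phi\neq0$, Proposition~\ref{prop:linear-trace-gap-cp} gives the coefficient $(p_*-1)\ST^p+2\lambda_{\mathrm T}$. For the truncated-region contributions on the boundary, the bounds \eqref{eq:N-q-le-two-critical} and \eqref{eq:N-q-gt-two-critical} show that the weight $(v+C_0|h|)^{p_*}/(v^2+h^2)$ exceeds $v^{p_*-2}$ by $o(1)$ on $\{|h|\le\gamma v\}$. Together these contradict the assumed failure, since there is a margin of $\lambda_{\mathrm T}$.

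If $\phi=0$, the boundary mass concentrates where $|h_k|\gtrsim v_k$, or escapes to infinity. On that set, the boundary integrand is $\lesssim|h|^{p_*}$ (using $p_*\le 2$, or the $C_1|h|^{q}$ term when $p_*>2$). By the trace inequality this is $\lesssim\|\nabla h\|_{L^p(\text{large-gradient region})}^{p_*}$, which is of higher order $\varepsilon_k^{p_*-p}$ relative to the bulk $|\nabla h|^p$ contribution and therefore vanishes after normalization. This again gives a contradiction.

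\textbf{Main obstacle.} The hardest step is the $p<2$ bookkeeping. Here the bulk lower bound is only $\min\{|\nabla h|^p,|\nabla v|^{p-2}|\nabla h|^2\}$, and one must show that the region where $|\nabla h|\gtrsim|\nabla v|$ contributes negligibly to the boundary term after normalization. I would first try the localized trace inequality combined with the truncation $\min(|h|,v)$, as in \cite{TraceGradientStability}. I expect that the constant $C_0$ forces $\eta_0$ to depend on $C_0$ precisely through this step.
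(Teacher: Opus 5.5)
\textbf{Verdict: there is a genuine gap for $1<p<2$.} For $p\ge2$ your plan is the paper's argument. There $\mathscr M_p(\nabla U,\nabla h)\gtrsim|\nabla U|^{p-2}|\nabla h|^2$ bounds $h_k/\mathcal H_U[h_k]^{1/2}$ in $\dot W^{1,2}(\Hhalf;|\nabla U|^{p-2})$. The compact weighted trace embedding then gives strong convergence in $L^2(\bdH,U^{p_*-2}dy)$, and your $\gamma$-splitting reproduces the liminf on the small-gradient set.

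The range $1<p<2$ contains every case $p_*\le2$, and two of your steps fail there.

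\emph{First step that fails.} Truncating $\phi_k$ in its values at level $\sim v/\varepsilon_k$ does not bound it in $\dot W^{1,2}(\Hhalf;|\nabla U|^{p-2})$. Smallness of $|h_k|$ says nothing about $|\nabla h_k|$. Where $|\nabla h_k|\gg|\nabla U|$, the remainder is only $\mathscr M_p\simeq|\nabla h_k|^p$, which is much smaller than $|\nabla U|^{p-2}|\nabla h_k|^2$. A small-amplitude, high-frequency oscillation is untouched by the truncation and makes the weighted form blow up relative to $\mathcal H_U[h_k]$. So the compact weighted embedding is not available, and neither the orthogonality in the limit nor the boundary limit follows.

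\emph{Second step that fails.} Your vanishing case is measured against the wrong scale. For $p<2$ one only has $\mathcal H_v[h]\ge c\|\nabla h\|_p^2$, and this is sharp: $h=\varepsilon\psi$ with $\psi\in C_c^\infty$ gives $\mathcal H\sim\varepsilon^2\ll\varepsilon^p$. The trace inequality bounds $\int_{\{|h|\gtrsim v\}}|h|^{p_*}$ by $\|\nabla h\|_{L^p(\Hhalf)}^{p_*}$, with the \emph{full} gradient norm. That is guaranteed to be $o(\mathcal H)$ only when $p_*>2$. Your localized bound, by the $L^p$ norm of $\nabla h$ over the bulk set $\{|\nabla h|\gtrsim|\nabla v|\}$, does not follow from the trace inequality, since that bulk set and the boundary set $\{|h|\gtrsim v\}$ are unrelated. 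Your last paragraph leaves this open.

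\emph{What is missing.} You need the compactness lemma that the paper imports as \cite[Lemma~4.3]{TraceGradientStability}, the trace analogue of the Figalli--Zhang compactness for $p<2$. The normalization must be $\varepsilon_k^2:=\int_{\Hhalf}(|\nabla U|+|\nabla h_k|)^{p-2}|\nabla h_k|^2\,dx\simeq\mathcal H_U[h_k]$, not $\|\nabla h_k\|_p$. Then $\phi_k=h_k/\varepsilon_k$ is bounded in $\Dp$ by H\"older, and the lemma yields $\phi_k\rightharpoonup\phi\perp T_U\MT$ together with
\[
 \frac1{\varepsilon_k^2}\int_{\bdH}\frac{(U+C_0|h_k|)^{p_*}}{U^2+|h_k|^2}|h_k|^2\,dy\longrightarrow\int_{\bdH}U^{p_*-2}\phi^2\,dy .
\]
When $p_*>2$ it gives strong convergence in $L^2(\bdH,U^{p_*-2}dy)$ instead. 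In words: no normalized boundary mass is lost on $\{|h_k|\gtrsim U\}$ or at infinity.

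With this in hand, no separate vanishing case is needed. If $\phi=0$, the normalized right-hand side tends to zero while $\mathcal H_U[h_k]/\varepsilon_k^2\ge c_p$, which contradicts the failed inequality. The bulk liminf is taken on $\{\varepsilon_k^{1/4}|\nabla\phi_k|\le|\nabla U|\}$ by lower semicontinuity, and it contradicts Proposition~\ref{prop:linear-trace-gap-cp}. You must either prove such a lemma or cite it; the value-truncation device does not replace it.
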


\begin{proof}
We give the details of the contradiction argument.  First, we reduce to
the standard bubble.  If $v=aU[\lambda,\xi]$, then
\eqref{eq:disturbed-gap-v-normalization} keeps the amplitude $a$ in a
compact subset of $(0,\infty)$. Hence, it is suffice to prove the proposition for $v=U$.

\medskip
\noindent\textbf{Step 1: the case $p_*\le2$.}
Assume that \eqref{eq:disturbed-gap-q-le-two} is false.  There are
$0\not\equiv h_k\perp T_U\MT$ such that
$\|\nabla h_k\|_{L^p(\Hhalf)}\to0$ and
\[
 \mathcal H_U[h_k]
 <\left[(p_*-1)\ST^p+\lambda_{\mathrm T}\right]
 \|U\|_{L^{p_*}(\bdH)}^{\,p-p_*}
 \int_{\bdH}
 \frac{(U+C_0|h_k|)^{p_*}}{U^2+|h_k|^2}|h_k|^2\,dy.
\]
Put
\begin{equation}
 \varepsilon_k^2
 :=\int_{\Hhalf}
 (|\nabla U|+|\nabla h_k|)^{p-2}|\nabla h_k|^2\,dx,
 \qquad
 \phi_k:=\frac{h_k}{\varepsilon_k}.
 \label{eq:critical-normalization-scale}
\end{equation}
By \eqref{eq:M-monotonicity-comparison},
\begin{equation}
 c_p\varepsilon_k^2
 \le\mathcal H_U[h_k]
 \le C_p\varepsilon_k^2.
 \label{eq:H-epsilon-comparable}
\end{equation}
Since $p_*\le2$ implies $p<2$,
\[
 \varepsilon_k^2
 \le\int_{\Hhalf}|\nabla h_k|^p\,dx\longrightarrow0.
\]
The compactness lemma
\cite[Lemma~4.3]{TraceGradientStability} is therefore applicable.
After passing to a subsequence, it gives
\[
 \phi_k\rightharpoonup\phi\quad\text{in }\Dp(\Hhalf),
 \qquad
 \phi_k\to\phi\quad\text{a.e.\ in }\Hhalf\text{ and on }\bdH,
 \qquad
 \phi\perp T_U\MT,
\]
where
$\phi\in\Dp(\Hhalf)\cap L^2(\bdH,U^{p_*-2}dy)$.  The last assertion
of \cite[Lemma~4.3]{TraceGradientStability} is precisely
\begin{equation}
 \frac1{\varepsilon_k^2}
 \int_{\bdH}
 \frac{(U+C_0|h_k|)^{p_*}}{U^2+|h_k|^2}|h_k|^2\,dy
 \longrightarrow
 \int_{\bdH}U^{p_*-2}|\phi|^2\,dy.
 \label{eq:critical-nonlinear-boundary-limit}
\end{equation}
Thus \eqref{eq:critical-nonlinear-boundary-limit} is exactly the
nonlinear boundary convergence supplied by that lemma.

We next prove the lower limit
\begin{equation}
 \liminf_{k\to\infty}
 \frac{\mathcal H_U[h_k]}{\varepsilon_k^2}
 \ge
 \int_{\Hhalf}\mathcal A_U\nabla\phi\cdot\nabla\phi\,dx.
 \label{eq:critical-bulk-liminf}
\end{equation}
Write
\[
 g_k(x,Y)
 :=\frac1{\varepsilon_k^2}
 \mathscr M_p(\nabla U(x),\varepsilon_kY).
\]
Then $g_k\ge0$ and
\[
 \frac{\mathcal H_U[h_k]}{\varepsilon_k^2}
 =\int_{\Hhalf}g_k(x,\nabla\phi_k)\,dx.
\]
Fix $R>0$, put $B_R^+:=B_R\cap\Hhalf$, and note that
\[
 0<c_R\le|\nabla U|\le C_R
 \qquad\text{on }\overline{B_R^+}.
\]
Define
\[
 E_{k,R}
 :=\{x\in B_R^+:
 \varepsilon_k^{1/4}|\nabla\phi_k(x)|\le|\nabla U(x)|\}.
\]
On $E_{k,R}$,
$\varepsilon_k|\nabla\phi_k|/|\nabla U|\le\varepsilon_k^{3/4}$.
To explain the resulting uniform expansion, set
\[
 F(X):=|X|^{p-2}X.
\]
For $X\ne0$,
\[
 DF(X)
 =|X|^{p-2}I+(p-2)|X|^{p-4}X\otimes X.
\]
The fundamental theorem of calculus gives the exact formula
\begin{align*}
 g_k(x,Y)
 &=\frac{F(\nabla U+\varepsilon_kY)-F(\nabla U)}{\varepsilon_k}
   \cdot Y\\
 &=\int_0^1
   DF(\nabla U+t\varepsilon_kY)Y\cdot Y\,dt.
\end{align*}
On $E_{k,R}$ the whole segment
$\nabla U+t\varepsilon_k\nabla\phi_k$, $0\le t\le1$, stays within
relative distance $\varepsilon_k^{3/4}$ of $\nabla U$.  Since
$\nabla U$ is smooth and bounded away from zero on
$\overline{B_R^+}$, Taylor's formula gives, for every $R,M<\infty$,
\[
 \sup_{\substack{x\in\overline{B_R^+}\\
       \varepsilon_k^{1/4}|Y|\le M|\nabla U(x)|}}
 \left|g_k(x,Y)-\mathcal A_U(x)Y\cdot Y\right|
 \longrightarrow0.
\]

Furthermore, since $\{\phi_k\}$ is uniformly
bounded in $\Dp$, we get
\[
 |B_R^+\setminus E_{k,R}|
 \le C_R\varepsilon_k^{p/4}
 \int_{B_R^+}|\nabla\phi_k|^p\,dx
 \longrightarrow0.
\]
It follows that
\[
 \mathbf1_{E_{k,R}}\nabla\phi_k
 \rightharpoonup\nabla\phi
 \qquad\text{weakly in }L^p(B_R^+).
\]
Since $g_k\ge0$, the lower semicontinuity theorem
\cite[Theorem~3]{Ioffe1977} and the uniform expansion above give
\begin{align*}
 \liminf_{k\to\infty}
 \frac{\mathcal H_U[h_k]}{\varepsilon_k^2}
 &\ge
 \liminf_{k\to\infty}
 \int_{E_{k,R}}g_k(x,\nabla\phi_k)\,dx\\
 &=\liminf_{k\to\infty}\int_{E_{k,R}}
   \mathcal A_U(x)\nabla\phi_k\cdot \nabla\phi_k\,dx\\
 &\ge
 \int_{B_R^+}\mathcal A_U\nabla\phi\cdot\nabla\phi\,dx.
\end{align*}
Letting $R\to\infty$ proves
\eqref{eq:critical-bulk-liminf}.  In particular, the right-hand side is
finite.  Hence
\[
 \phi\in\dot W^{1,2}
 (\Hhalf;|\nabla U|^{p-2}).
\]

Divide the failed inequality by $\varepsilon_k^2$ and combine
\eqref{eq:critical-nonlinear-boundary-limit} with
\eqref{eq:critical-bulk-liminf}.  We obtain
\[
 \int_{\Hhalf}\mathcal A_U\nabla\phi\cdot\nabla\phi\,dx
 \le
 \left[(p_*-1)\ST^p+\lambda_{\mathrm T}\right]
 \|U\|_{L^{p_*}(\bdH)}^{\,p-p_*}
 \int_{\bdH}U^{p_*-2}|\phi|^2\,dy.
\]
The limit is nonzero.  Otherwise,
\eqref{eq:critical-nonlinear-boundary-limit} and the failed inequality
would imply
\[
 \limsup_{k\to\infty}
 \frac{\mathcal H_U[h_k]}{\varepsilon_k^2}\le0,
\]
contradicting the lower bound in
\eqref{eq:H-epsilon-comparable}.  The last displayed inequality now
contradicts Proposition~\ref{prop:linear-trace-gap-cp}, whose
coefficient exceeds the last one by
$\lambda_{\mathrm T}\|U\|_{L^{p_*}(\bdH)}^{p-p_*}>0$.

\medskip
\noindent\textbf{Step 2: the case $p_*>2$.}
Assume that \eqref{eq:disturbed-gap-q-gt-two} is false.  Then there are
$0\not\equiv h_k\perp T_U\MT$, with $\|\nabla h_k\|_p\to0$, such that
\[
 \mathcal H_U[h_k]
 <\left[(p_*-1)\ST^p+\lambda_{\mathrm T}\right]
 \|U\|_{L^{p_*}(\bdH)}^{\,p-p_*}
 \int_{\bdH}U^{p_*-2}|h_k|^2\,dy.
\]
Define $\varepsilon_k$ and $\phi_k$ by
\eqref{eq:critical-normalization-scale}; the comparison
\eqref{eq:H-epsilon-comparable} remains valid.

Suppose first that $1<p<2$.
The compactness lemma \cite[Lemma~4.3]{TraceGradientStability} yields,
after a subsequence,
\[
 \phi_k\rightharpoonup\phi\quad\text{in }\Dp(\Hhalf),
 \qquad
 \phi_k\to\phi
 \quad\text{strongly in }L^2(\bdH,U^{p_*-2}dy),
 \qquad
 \phi\perp T_U\MT.
\]
The failed inequality and the strong boundary convergence bounds
$\mathcal H_U[h_k]/\varepsilon_k^2$.  The proof of
\eqref{eq:critical-bulk-liminf} in Step~1 therefore applies without
change.

Suppose instead that $p\ge2$.  First,
\begin{align*}
 \varepsilon_k^2
 &\le C\int_{\Hhalf}
 \bigl(|\nabla U|^{p-2}|\nabla h_k|^2+|\nabla h_k|^p\bigr)\,dx\\
 &\le C\|\nabla U\|_p^{p-2}\|\nabla h_k\|_p^2
 +C\|\nabla h_k\|_p^p
 \longrightarrow0.
\end{align*}
Moreover, by the definition of $\varepsilon_k$,
\[
 \int_{\Hhalf}|\nabla U|^{p-2}|\nabla\phi_k|^2\,dx
 \le1.
\]
Thus, after a subsequence,
\[
 \phi_k\rightharpoonup\phi
 \quad\text{in }
 \dot W^{1,2}(\Hhalf;|\nabla U|^{p-2}).
\]
The compact weighted trace embedding
\cite[Theorem~2.4]{TraceGradientStability} gives
\[
 \phi_k\to\phi
 \quad\text{strongly in }L^2(\bdH,U^{p_*-2}dy).
\]
The orthogonality conditions pass to the limit, so
$\phi\perp T_U\MT$.

For completeness, the lower limit \eqref{eq:critical-bulk-liminf} also holds in this range.
On each $B_R^+$ the sequence $(\nabla\phi_k)$ is bounded in $L^2$.
With $E_{k,R}$ as in Step~1,
\[
 |B_R^+\setminus E_{k,R}|
 \le C_R\varepsilon_k^{1/2}
 \int_{B_R^+}|\nabla\phi_k|^2\,dx
 \longrightarrow0,
\]
and
$\mathbf1_{E_{k,R}}\nabla\phi_k\rightharpoonup\nabla\phi$ in
$L^2(B_R^+)$.  The same Taylor estimate for $g_k$, followed by weak
lower semicontinuity and then $R\to\infty$, proves
\eqref{eq:critical-bulk-liminf}.

We have therefore shown in both ranges $1<p<2$ and $p\ge2$ that
\[
 \phi_k\to\phi
 \quad\text{strongly in }L^2(\bdH,U^{p_*-2}dy),
 \qquad
 \liminf_{k\to\infty}
 \frac{\mathcal H_U[h_k]}{\varepsilon_k^2}
 \ge
 \int_{\Hhalf}\mathcal A_U\nabla\phi\cdot\nabla\phi\,dx.
\]
Dividing the failed inequality by $\varepsilon_k^2$ yields
\[
 \int_{\Hhalf}\mathcal A_U\nabla\phi\cdot\nabla\phi\,dx
 \le
 \left[(p_*-1)\ST^p+\lambda_{\mathrm T}\right]
 \|U\|_{L^{p_*}(\bdH)}^{\,p-p_*}
 \int_{\bdH}U^{p_*-2}|\phi|^2\,dy.
\]
Again $\phi\ne0$: if $\phi=0$, strong weighted boundary convergence
would make the normalized right-hand side tend to zero, whereas
\eqref{eq:H-epsilon-comparable} keeps the normalized left-hand side
bounded below by $c_p>0$.  We have reached the same contradiction with
Proposition~\ref{prop:linear-trace-gap-cp}.

This proves both estimates at $U$.  The initial transport argument and
the compact amplitude range supplied by
\eqref{eq:disturbed-gap-v-normalization} give the asserted uniform
constants for general $v\in\MT^+$.
\end{proof}

\section{Proof of Theorem~\ref{thm:trace-critical-stability}}

We use the following normalized-manifold version of
\cite[Proposition~5.3]{TraceGradientStability}, obtained by restricting
the modulation parameters to dilations and tangential translations.

\begin{lemma}[Orthogonal modulation]
\label{lem:critical-modulation}
There is a modulus $\omega(t)\to0$ as $t\to0$ such that, if
$\|\nabla(u-W)\|_p\le t$ for some $W\in\NT$, then there is
$v\in\NT$ for which, with $h=u-v$,
\begin{equation}
 h\perp T_v\NT,
 \qquad
 \|\nabla h\|_p\le\omega(t).
 \label{eq:modulation-output-cp}
\end{equation}
\end{lemma}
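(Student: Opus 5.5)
The plan is to minimize the distance over the two-parameter family $\NT$ and to read the orthogonality off the first-order optimality conditions. The orthogonality here is with respect to the boundary weight $v^{p_*-2}$, so the natural functional to minimize is not the $\Dp$ distance. Instead I would minimize the weighted boundary quantity
\[
 \Phi_u(\lambda,\xi):=\int_{\bdH}W[\lambda,\xi]^{p_*-2}\,\bigl|u-W[\lambda,\xi]\bigr|^2\,dy ,
\]
or, more conveniently, follow the proof of \cite[Proposition~5.3]{TraceGradientStability} with the amplitude frozen at $a_{\mathrm T}$.

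\textbf{Step 1: reduction to $W_0:=W[1,0]$.} The group $G_{\lambda,\xi}$ acts isometrically on $\Dp(\Hhalf)$ and on $L^{p_*}(\bdH)$, and it maps $\NT$ onto itself. It also transports tangent spaces and the weighted orthogonality conditions, since $\int_{\bdH}v^{p_*-2}hZ\,dy$ is invariant under the simultaneous action on $v,h,Z$. Hence we may assume $\|\nabla(u-W_0)\|_p\le t$.

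\textbf{Step 2: existence of a minimizer near $(1,0)$.} By the trace inequality \eqref{eq:sharp-trace-cp}, $\|u-W_0\|_{L^{p_*}(\bdH)}\le \ST^{-1}t$. The map $(\lambda,\xi)\mapsto W[\lambda,\xi]$ is continuous into $L^{p_*}(\bdH)$ and is proper modulo $\lambda\to0,\infty$ or $|\xi|\to\infty$. In those regimes $\|W[\lambda,\xi]-W_0\|_{L^{p_*}(\bdH)}$ stays bounded below by a positive constant, by loss of overlap of concentrating or escaping profiles. Therefore, for $t$ small, the infimum of $\|u-W[\lambda,\xi]\|_{L^{p_*}(\bdH)}$ is attained at some $(\lambda_t,\xi_t)$, and it is at most $Ct$.

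I would then take the weighted functional $\Phi_u$ above, with $v=W[\lambda,\xi]$, restricted to a fixed compact neighbourhood $K$ of $(1,0)$. The first step is to show that the minimizer lies in the interior of $K$. This holds because $\Phi_u(1,0)\lesssim t^2$, while on $\partial K$ one has $\Phi_u\gtrsim c_K-Ct$, using Hölder with exponents $p_*/(p_*-2)$ and $p_*/2$ when $p_*\ge2$. When $p_*<2$ the weight $v^{p_*-2}$ is singular only where $v$ is small; since $v$ decays but stays positive, the integral is still controlled, $\int v^{p_*-2}|h|^2\le C\|h\|_{L^{p_*}}^{2}\|v\|^{p_*-2}_{L^{p_*}}$-type bounds hold by Hölder with the reversed roles, and a direct argument works.

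\textbf{Step 3: Euler--Lagrange condition and the bound.} The Euler--Lagrange equation at an interior minimizer gives
\[
 \int_{\bdH}\Bigl(2v^{p_*-2}hZ+(p_*-2)v^{p_*-3}Z|h|^2\Bigr)\,dy=0
 \qquad\text{for every } Z\in T_v\NT .
\]
The quadratic term prevents exact orthogonality. To obtain exact orthogonality I would instead use the implicit function theorem on the map
\[
 F(u,\lambda,\xi):=\Bigl(\int_{\bdH}W[\lambda,\xi]^{p_*-2}\,(u-W[\lambda,\xi])\,Z_i[\lambda,\xi]\,dy\Bigr)_{i=1}^{n},
\]
where $Z_i$ are the $n$ tangent generators: one dilation and $n-1$ translations. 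We have $F(W_0,1,0)=0$. The Jacobian in $(\lambda,\xi)$ at this point equals $-\bigl(\int_{\bdH}W_0^{p_*-2}Z_iZ_j\,dy\bigr)_{i,j}$, the Gram matrix, which is invertible since the $Z_i$ are linearly independent. Moreover, $F$ is $C^1$ in $(\lambda,\xi)$ and Lipschitz in $u\in\Dp$, uniformly, by the trace embedding and Hölder. The implicit function theorem then yields $(\lambda,\xi)$ with $|\lambda-1|+|\xi|\le Ct$ and $F=0$, that is, $h\perp T_v\NT$. Finally,
\[
 \|\nabla h\|_p\le t+\|\nabla(W_0-W[\lambda,\xi])\|_p\le t+C(|\lambda-1|+|\xi|)=:\omega(t).
\]

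\textbf{Main obstacle.} The delicate step is regularity of $F$ when $p_*<2$. There the weight $W^{p_*-2}$ is unbounded at infinity and the profiles are only Hölder-differentiable in $u$, so the $C^1$ requirement is not automatic. I would handle this with explicit decay bounds, $|Z_i|\lesssim W$ and $|\partial_{\lambda,\xi}(W^{p_*-2}Z_i)|\lesssim W^{p_*-1}$, which make every integrand dominated by $W^{p_*-1}|u-W|\in L^1$. If needed, I would fall back on the Brouwer-degree version of the implicit function theorem, which requires only continuity in $(\lambda,\xi)$ and the Gram nondegeneracy.
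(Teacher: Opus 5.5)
Once Step~2 is removed, Steps~1 and~3 give a correct, self-contained proof. It is more explicit than the paper, which gives no separate argument: it takes the full-manifold modulation result \cite[Proposition~5.3]{TraceGradientStability}, restricts it to dilations and tangential translations, and fixes the amplitude separately through \eqref{eq:amplitude-correction-cp}. The citation buys brevity and consistency with the energy-stability paper. Your implicit-function argument buys a transparent proof with a linear modulus $\omega(t)\le Ct$ for $t\le t_0(n,p)$, which is the only regime used ($t\le2\delta$) in the proof of Theorem~\ref{thm:trace-critical-stability}.

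Step~3 is also easier than your ``main obstacle'' suggests. Since $\|W[\lambda,\xi]\|_{L^{p_*}(\bdH)}$ is constant on $\NT$, one has $\int_{\bdH}W^{p_*-1}Z_i\,dy=0$, so
\[
 F(u,\lambda,\xi)=\Bigl(\int_{\bdH}W[\lambda,\xi]^{p_*-2}Z_i[\lambda,\xi]\,u\,dy\Bigr)_{i=1}^n
\]
is linear in $u$; nothing is merely H\"older in $u$. The bounds $|Z_i|+|\partial_{\lambda,\xi}Z_i|\lesssim W$ and $W[\lambda,\xi]\le CW[1,0]$ for $(\lambda,\xi)$ near $(1,0)$ make $(\lambda,\xi)\mapsto W^{p_*-2}Z_i$ a $C^1$ map into $L^{p_*'}(\bdH)$, also when $p_*<2$. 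By the trace inequality, $F$ is then jointly $C^1$ on $\Dp(\Hhalf)\times\R^n$. The classical implicit function theorem therefore applies at $(W[1,0],1,0)$, with the negative weighted Gram matrix as Jacobian, and no Brouwer-degree fallback is needed.

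Step~2 should be deleted rather than merely bypassed, because its claim for $p_*<2$ is false. A trace in $L^{p_*}(\bdH)$ does not make $\int_{\bdH}v^{p_*-2}|h|^2\,dy$ finite, since the weight blows up at infinity. H\"older only gives the reverse bound
\[
 \|h\|_{L^{p_*}(\bdH)}^2\le\|v\|_{L^{p_*}(\bdH)}^{2-p_*}\int_{\bdH}v^{p_*-2}|h|^2\,dy,
\]
so $\Phi_u$ can be $+\infty$ on $\Dp(\Hhalf)$. This is why the paper's disturbed gap for $p_*\le2$ uses the modified weight $(v+C_0|h|)^{p_*}/(v^2+|h|^2)$. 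The orthogonality integrals defining $F$, by contrast, are always finite because $|Z_i|\lesssim v$, as you correctly note at the end.
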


\begin{proof}[Proof of Theorem~\ref{thm:trace-critical-stability}]
Choose $W\in\NT$ such that
\[
 t:=\|\nabla(u-W)\|_p\le2\delta,
\]
and apply Lemma~\ref{lem:critical-modulation}.  Denote the resulting
normalized bubble by $v_0\in\NT$ and put $h_0:=u-v_0$.  Thus
\[
 h_0\perp T_{v_0}\NT,
 \qquad
 \|\nabla h_0\|_p\le\omega(t).
\]
Define
\begin{equation}
 a:=\frac{\displaystyle\int_{\bdH}v_0^{p_*-1}u\,dy}
          {\displaystyle\int_{\bdH}v_0^{p_*}\,dy},
 \qquad
 \widetilde v:=av_0,
 \qquad
 h:=u-\widetilde v.
 \label{eq:amplitude-correction-cp}
\end{equation}
By the trace inequality and the fact that the $L^{p_*}$ norm is
constant on $\NT$,
\[
 |a-1|
 \le\frac{\|h_0\|_{L^{p_*}(\bdH)}}
          {\|v_0\|_{L^{p_*}(\bdH)}}
 \le C\|\nabla h_0\|_p
 \le C\omega(t).
\]
In particular, $a>0$ for small $t$.  Moreover,
\[
 T_{\widetilde v}\MT
 =\operatorname{span}\{\widetilde v\}\oplus aT_{v_0}\NT.
\]
The definition of $a$ gives
\[
 \int_{\bdH}\widetilde v^{p_*-2}h\widetilde v\,dy=0.
\]
If $Z_0\in T_{v_0}\NT$, then
$\int_{\bdH}v_0^{p_*-1}Z_0\,dy=0$, because the $L^{p_*}$ norm is
constant on $\NT$.  Hence
\begin{align*}
 \int_{\bdH}\widetilde v^{p_*-2}h(aZ_0)\,dy
 &=a^{p_*-1}\left[
   \int_{\bdH}v_0^{p_*-2}h_0Z_0\,dy
   +(1-a)\int_{\bdH}v_0^{p_*-1}Z_0\,dy\right]\\
 &=0.
\end{align*}
Therefore $h\perp T_{\widetilde v}\MT$.  Also,
\[
 \|\nabla h\|_p
 \le\|\nabla h_0\|_p+|a-1|\|\nabla v_0\|_p
 \le C\omega(t),
 \qquad
 \Lambda_{\widetilde v}=a^{p-p_*}\longrightarrow1
 \quad\text{as }t\to0.
\]
Set
\[
 \varepsilon:=\|\nabla h\|_{L^p(\Hhalf)},
 \qquad
 R:=\|\mathcal P_{\mathrm T}(u)\|_{W^{-1,p'}(\Hhalf)}.
\]
Assume first that $\varepsilon>0$.  After decreasing the closeness
threshold, $\Lambda_{\widetilde v}$ is as close to $1$ as needed below.
Since
\[
 \|\widetilde v\|_{L^{p_*}(\bdH)}
 =\ST^{\frac{n-p}{p-1}}
 \Lambda_{\widetilde v}^{-1/(p_*-p)},
\]
condition \eqref{eq:disturbed-gap-v-normalization} also holds, and
Proposition~\ref{prop:disturbed-trace-gap-cp} applies.

Because $\widetilde v\in T_{\widetilde v}\MT$, the orthogonality gives
\begin{equation}
 \int_{\bdH}\widetilde v^{p_*-1}h\,dy=0.
 \label{eq:amplitude-orthogonality-cp}
\end{equation}
Equation \eqref{eq:EL-v-cp} then also yields
\begin{equation}
 \int_{\Hhalf}|\nabla\widetilde v|^{p-2}
 \nabla\widetilde v\cdot\nabla h\,dx=0.
 \label{eq:bulk-linear-cancellation-cp}
\end{equation}
Consequently, testing the residual by $h$ and using
\eqref{eq:M-critical-definition}--\eqref{eq:N-critical-definition},
we obtain the exact identity
\begin{equation}
 \langle\mathcal P_{\mathrm T}(u),h\rangle
 =\mathcal H_{\widetilde v}[h]
 -\int_{\bdH}\mathscr N_{p_*}(\widetilde v,h)\,dy.
 \label{eq:critical-master-identity}
\end{equation}
Hence
\begin{equation}
 R\varepsilon
 \ge
 \mathcal H_{\widetilde v}[h]
 -\int_{\bdH}\mathscr N_{p_*}(\widetilde v,h)\,dy.
 \label{eq:critical-master-bound}
\end{equation}

\medskip
\noindent\textbf{Case 1: $p_*\le2$.}
Choose $\kappa>0$ sufficiently small, depending only on $n$ and $p$,
and let $C_0=C_0(p_*,\kappa)$ be the constant from
\eqref{eq:N-q-le-two-critical}.  Then decrease the closeness threshold
so that Proposition~\ref{prop:disturbed-trace-gap-cp} applies with this
$C_0$ and $\Lambda_{\widetilde v}$ is sufficiently close to $1$.  By
\eqref{eq:N-q-le-two-critical},
\[
 \int_{\bdH}\mathscr N_{p_*}(\widetilde v,h)\,dy
 \le(p_*-1+\kappa)
 \int_{\bdH}
 \frac{(\widetilde v+C_0|h|)^{p_*}}
 {\widetilde v^2+|h|^2}|h|^2\,dy.
\]
Use \eqref{eq:disturbed-gap-q-le-two}.  Since
$\Lambda_{\widetilde v}\to1$ and
$\|\widetilde v\|_{L^{p_*}(\bdH)}^{p-p_*}
=\Lambda_{\widetilde v}/\ST^p$, the extra
$\lambda_{\mathrm T}$ term in the disturbed gap stays uniformly
positive.  Hence the preceding choices of $\kappa$ and the closeness
threshold ensure that
\begin{equation}
 R\varepsilon\ge c\mathcal H_{\widetilde v}[h].
 \label{eq:case-one-H-control}
\end{equation}
Here $p_*\le2$ implies $p<2$.  Write
$h=\varepsilon\phi$ with $\|\nabla\phi\|_p=1$.  From
\eqref{eq:M-min-form-critical},
\begin{align*}
 \mathcal H_{\widetilde v}[h]
 &\ge c\int_{\Hhalf}
 \min\{\varepsilon^p|\nabla\phi|^p,
 \varepsilon^2|\nabla\widetilde v|^{p-2}|\nabla\phi|^2\}\,dx.
\end{align*}
The last integral is at least $c\varepsilon^2$.  Indeed, on
$E=\{\varepsilon|\nabla\phi|\ge|\nabla\widetilde v|\}$ it contains the
first
term.  If $\int_E|\nabla\phi|^p\ge1/2$, this gives
$c\varepsilon^p\ge c\varepsilon^2$.  Otherwise the complement carries
at least half of the $L^p$ mass, and H\"older's inequality, together
with $\|\nabla\widetilde v\|_p\simeq_{n,p}1$, gives
\[
 \int_{E^c}|\nabla\widetilde v|^{p-2}|\nabla\phi|^2\,dx\ge c.
\]
Thus \eqref{eq:case-one-H-control} gives
$R\varepsilon\ge c\varepsilon^2$, and therefore
\begin{equation}
 \varepsilon\le CR.
 \label{eq:case-one-final-cp}
\end{equation}

\medskip
\noindent\textbf{Case 2: $p_*>2$ and $1<p<2$.}
Choose $\kappa>0$ sufficiently small and then decrease the closeness
threshold.  By \eqref{eq:N-q-gt-two-critical} and
\eqref{eq:disturbed-gap-q-gt-two},
\begin{equation}
 R\varepsilon
 \ge c\mathcal H_{\widetilde v}[h]
 -C\|h\|_{L^{p_*}(\bdH)}^{p_*}.
 \label{eq:case-two-preliminary-cp}
\end{equation}
The preceding mixed-remainder argument gives
$\mathcal H_{\widetilde v}[h]\ge c\varepsilon^2$, while the trace
inequality gives
$\|h\|_{p_*}^{p_*}\le C\varepsilon^{p_*}$.  Since $p_*>2$, the last
term in \eqref{eq:case-two-preliminary-cp} is absorbed for small
$\varepsilon$.  Hence again
\begin{equation}
 \varepsilon\le CR.
 \label{eq:case-two-final-cp}
\end{equation}

\medskip
\noindent\textbf{Case 3: $p\ge2$.}
Here $p_*>p$.  The same boundary argument gives
\begin{equation}
 R\varepsilon
 \ge c\mathcal H_{\widetilde v}[h]
 -C\|h\|_{L^{p_*}(\bdH)}^{p_*}.
 \label{eq:case-three-preliminary-cp}
\end{equation}
By \eqref{eq:M-p-ge-two-critical} and the trace inequality,
\[
 \mathcal H_{\widetilde v}[h]\ge c\varepsilon^p,
 \qquad
 \|h\|_{L^{p_*}(\bdH)}^{p_*}\le C\varepsilon^{p_*}.
\]
Since $p_*>p$, the boundary error is absorbed for small
$\varepsilon$.  Thus
\begin{equation}
 \varepsilon^{p-1}\le CR.
 \label{eq:case-three-final-cp}
\end{equation}

We now recover the normalized amplitude, allowing also
$\varepsilon=0$.  For $X,Y\in\R^n$, $b>0$, $c\in\R$, and $q>1$, define
the energy remainders from
\cite[Subsection~4.1]{TraceGradientStability} by
\begin{align*}
 \mathscr G_p(X,Y)
 &:={}|X+Y|^p-|X|^p-p|X|^{p-2}X\cdot Y,\\
 \mathscr B_q(b,c)
 &:={}|b+c|^q-b^q-qb^{q-1}c,\\
 \mathcal G_{\widetilde v}[h]
 &:={\frac{2}{p}}\int_{\Hhalf}
 \mathscr G_p(\nabla\widetilde v,\nabla h)\,dx.
\end{align*}

\begin{claim}
If $\delta$ is sufficiently small, then $|a-1|\le CR$.
\end{claim}

Indeed, testing the residual by $u$ and using
\eqref{eq:amplitude-orthogonality-cp}--
\eqref{eq:bulk-linear-cancellation-cp}, we obtain the exact identity
\begin{align}
 \langle\mathcal P_{\mathrm T}(u),u\rangle
 &=\int_{\Hhalf}|\nabla u|^p\,dx
   -\int_{\bdH}|u|^{p_*}\,dy\notag\\
 &=a^p(1-a^{p_*-p})
   \ST^{\frac{p(n-1)}{p-1}}
   +\frac p2\mathcal G_{\widetilde v}[h]
   -\int_{\bdH}\mathscr B_{p_*}(\widetilde v,h)\,dy.
 \label{eq:amplitude-energy-identity-cp}
\end{align}
The boundary Taylor estimates in
\cite[Subsection~4.1]{TraceGradientStability}, together with the
disturbed energy gap
\cite[Proposition~4.5 and Corollary~4.6]{TraceGradientStability}, give
\begin{equation}
 \int_{\bdH}\mathscr B_{p_*}(\widetilde v,h)\,dy
 \le C\mathcal G_{\widetilde v}[h]
      +C\|h\|_{L^{p_*}(\bdH)}^{p_*}.
 \label{eq:boundary-energy-remainder-control-cp}
\end{equation}

Since $a\to1$, the first term on the right-hand side of
\eqref{eq:amplitude-energy-identity-cp} has absolute value bounded
below by $c|a-1|$.  Moreover,
$|\langle\mathcal P_{\mathrm T}(u),u\rangle|
\le R\|\nabla u\|_p\le CR$.  Hence
\begin{equation}
 |a-1|\le CR+C\mathcal G_{\widetilde v}[h]+C\varepsilon^{p_*}.
 \label{eq:amplitude-before-monotonicity-cp}
\end{equation}
Lemma~\ref{lem:pointwise-critical-remainders} and
\cite[Lemma~4.1]{TraceGradientStability} show that
$\mathcal G_{\widetilde v}[h]\le C\mathcal H_{\widetilde v}[h]$.
The estimates in the three cases above (or simply
$\mathcal H_{\widetilde v}[h]=0$ if $\varepsilon=0$)
therefore yield
\[
 |a-1|
 \le CR+C\mathcal H_{\widetilde v}[h]+C\varepsilon^{p_*}
 \le CR+C\varepsilon R+C\varepsilon^{p_*}
 \le CR.
\]
This proves the claim.

Finally, let
$\alpha:=\min\{1,1/(p-1)\}$.  The three cases, together with the
trivial estimate when $\varepsilon=0$, give
$\varepsilon\le CR^\alpha$.  After decreasing $\delta$ once more, the
continuity of the residual at $\NT$ ensures that $R\le1$.  Therefore
\begin{equation}
 \|\nabla h_0\|_{L^p(\Hhalf)}
 \le\varepsilon+C|a-1|
 \le C(R^\alpha+R)
 \le CR^\alpha.
 \label{eq:normalized-remainder-final-cp}
\end{equation}
Since
$\alpha=1/\max\{1,p-1\}$, taking $v_0\in\NT$ as the normalized
bubble in the statement and raising
\eqref{eq:normalized-remainder-final-cp} to the power
$\max\{1,p-1\}$ proves
\eqref{eq:critical-point-stability-main}.  The distance estimate
\eqref{eq:critical-point-distance-form} follows immediately.

\end{proof}

\section{Global compactness and the energy-window consequences}
\label{sec:global-compactness}

In this section we prove Theorem~\ref{thm:global-compactness}, then
deduce Corollary~\ref{thm:trace-nu-bubble-compactness} and
Corollary~\ref{cor:global-one-bubble-critical-stability}.  Besides the
trace exponent $p_*$, we use
\[
 p^*:=\frac{np}{n-p},
 \qquad
 p_*':=\frac{p_*}{p_*-1}.
\]
Recall that
\[
 \Dp(\Hhalf)
 :=\{u\in L^{p^*}(\Hhalf):\nabla u\in L^p(\Hhalf)\},
 \qquad
 \|u\|_{\Dp(\Hhalf)}:=\|\nabla u\|_{L^p(\Hhalf)}.
\]

During the extraction it is convenient to use the derivative
$P_+(u)\in W^{-1,p'}(\Hhalf)$ of the positive functional
\eqref{eq:positive-functional}, defined by
\begin{equation}
 \langle P_+(u),\varphi\rangle
 :=\int_{\Hhalf}|\nabla u|^{p-2}\nabla u\cdot\nabla\varphi\,dx
 -\int_{\bdH}(u_+)^{p_*-1}\varphi\,dy.
 \label{eq:positive-residual}
\end{equation}
The signed and positive residuals satisfy
\begin{equation}
 \|P_+(u)-\PT(u)\|_{W^{-1,p'}(\Hhalf)}
 \le \ST^{-1}\|u_-\|_{L^{p_*}(\bdH)}^{p_*-1}.
 \label{eq:positive-signed-comparison}
\end{equation}
We fix $W:=W[1,0]$ and recall the notation
\[
 W[\lambda,\xi]=G_{\lambda,\xi}W.
\]
The functional, both residual norms, the gradient norm, and the
critical trace norm are invariant under $G_{\lambda,\xi}$.

\subsection{Nonlinear splitting and local compactness}

We first recall the norm-splitting form of the Br\'ezis--Lieb lemma.

\begin{classicalBL}[Original scalar form]
Let $(\Omega,\mu)$ be a measure space, let $0<q<\infty$, and let
$(f_k)\subset L^q(\Omega,\mu)$ satisfy
\[
 f_k\longrightarrow f\quad\mu\text{-a.e.},
 \qquad
 \sup_k\int_\Omega|f_k|^q\,d\mu<\infty.
\]
Then $f\in L^q(\Omega,\mu)$ and
\begin{equation*}
 \lim_{k\to\infty}
 \left(
  \int_\Omega|f_k|^q\,d\mu
  -\int_\Omega|f_k-f|^q\,d\mu
 \right)
 =\int_\Omega|f|^q\,d\mu.
 \tag{BL}
\end{equation*}
The same statement holds for finite-dimensional vector-valued
functions, with $|\cdot|$ denoting the Euclidean norm.
\end{classicalBL}
This is the classical Br\'ezis--Lieb lemma~\cite{BrezisLieb1983}.

We next prove the derivative splittings needed for the
Euler--Lagrange operator.  They are the counterparts
of Lemma~3.2 of Mercuri--Willem~\cite{MercuriWillem2010}.

\begin{lemma}[Nonlinear Br\'ezis--Lieb splitting]
\label{lem:nonlinear-BL}
Let $q>1$.
\begin{enumerate}
\item If $X_k\to X$ almost everywhere, with $(X_k)$ bounded in
$L^q(\Omega;\R^N)$, then
\[
 |X_k|^{q-2}X_k
 -|X_k-X|^{q-2}(X_k-X)-|X|^{q-2}X
 \longrightarrow0
 \quad\text{in }L^{q'}(\Omega;\R^N).
\]
\item If $s_k\to s$ almost everywhere, with $(s_k)$ bounded in
$L^q(\Omega)$, then
\begin{align*}
 &(s_k)_+^{q-1}-(s_k-s)_+^{q-1}-s_+^{q-1}
 \longrightarrow0\quad\text{in }L^{q'}(\Omega),\\
 &\int_\Omega (s_k)_+^q
 -\int_\Omega (s_k-s)_+^q
 -\int_\Omega s_+^q\longrightarrow0.
\end{align*}
\end{enumerate}
\end{lemma}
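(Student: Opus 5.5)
The plan is to reduce both parts of Lemma~\ref{lem:nonlinear-BL} to one elementary pointwise inequality plus a Vitali / dominated-convergence argument, as in the proof of the classical Br\'ezis--Lieb lemma.

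\textbf{Pointwise inequality.} For $F(X):=|X|^{q-2}X$ on $\R^N$ we use the following fact. For every $\epsilon>0$ there is $C_\epsilon$ with
\[
 \bigl|F(a+b)-F(a)\bigr|\le\epsilon|a|^{q-1}+C_\epsilon|b|^{q-1},
 \qquad a,b\in\R^N .
\]
This holds because $F$ is continuous and homogeneous of degree $q-1$. If $|b|\le\eta|a|$, uniform continuity of $F$ on the unit sphere neighbourhood gives the bound $\epsilon|a|^{q-1}$. If $|b|>\eta|a|$, both terms are bounded by $C|b|^{q-1}$, so the second term absorbs them.

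\textbf{Part (1).} Apply this with $a=X_k-X$ and $b=X$, and set
\[
 f_k:=F(X_k)-F(X_k-X)-F(X).
\]
Then
\[
 \bigl(|f_k|-\epsilon|X_k-X|^{q-1}\bigr)_+\le (C_\epsilon+1)|X|^{q-1}.
\]
The right-hand side lies in $L^{q'}$ since $X\in L^q$ by Fatou. Also $f_k\to0$ a.e., because $X_k-X\to0$ a.e. and $F$ is continuous with $F(0)=0$. Dominated convergence therefore gives
\[
 \int\bigl(|f_k|-\epsilon|X_k-X|^{q-1}\bigr)_+^{q'}\to0 .
\]
Since $\|X_k-X\|_q$ is bounded, this yields $\limsup\|f_k\|_{q'}\le C\epsilon$. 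Letting $\epsilon\to0$ proves (1).

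\textbf{Part (2).} The first assertion follows in the same way with the scalar map $\sigma(s)=s_+^{q-1}$. This map is continuous, positively homogeneous of degree $q-1$, and vanishes at $0$, so the same $\epsilon$--$C_\epsilon$ inequality holds for it. The integral assertion is the Br\'ezis--Lieb argument applied to $\Phi(s)=s_+^q$, which satisfies
\[
 |\Phi(a+b)-\Phi(a)|\le\epsilon|a|^q+C_\epsilon|b|^q .
\]
Repeating the dominated-convergence step gives
\[
 \Phi(s_k)-\Phi(s_k-s)-\Phi(s)\to0 \quad\text{in } L^1,
\]
which implies convergence of the integrals.

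\textbf{Main obstacle.} The only delicate point is the uniform pointwise inequality for $F$ and for the truncated maps when $1<q<2$, where $F$ is merely H\"older rather than Lipschitz. Homogeneity plus a compactness argument on the region $|b|\le\eta|a|$, normalised to $|a|=1$, handles this without any differentiability.
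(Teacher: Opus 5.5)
Your proposal is correct and follows essentially the same route as the paper. Both proofs combine a pointwise $\varepsilon$--$C_\varepsilon$ estimate for the nonlinearity with the Br\'ezis--Lieb positive-part domination argument and dominated convergence. The differences are cosmetic: you get the pointwise inequality from homogeneity and uniform continuity, whereas the paper uses explicit H\"older and mean-value bounds plus Young's inequality; and you run the domination in $L^{q'}$ via Minkowski, whereas the paper first raises to the power $q'$ and works in $L^1$.
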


\begin{proof}
Set
\[
 A_q(X):=|X|^{q-2}X,
 \qquad
 B_q(s):=(s_+)^{q-1},
 \qquad
 F_q(s):=(s_+)^q.
\]
We shall use three elementary estimates.  If $1<q<2$, then
\begin{equation}
 |A_q(a+b)-A_q(a)|+|B_q(\alpha+\beta)-B_q(\alpha)|
 \le C_q\bigl(|b|^{q-1}+|\beta|^{q-1}\bigr).
 \label{eq:holder-nonlinearity}
\end{equation}
To verify this, first suppose $|a|\le2|b|$.  Then $|a+b|\le3|b|$,
and homogeneity gives
\[
 |A_q(a+b)-A_q(a)|
 \le |a+b|^{q-1}+|a|^{q-1}\le C_q|b|^{q-1}.
\]
If $|a|>2|b|$, then $|a+tb|\ge|a|/2$ for $0\le t\le1$.  Since
$\|DA_q(\zeta)\|\le C_q|\zeta|^{q-2}$ for $\zeta\ne0$, integration
along the segment gives
\[
 |A_q(a+b)-A_q(a)|
 \le C_q|a|^{q-2}|b|\le C_q|b|^{q-1}.
\]
The scalar estimate follows from
$|(x_+)^{q-1}-(y_+)^{q-1}|\le|x-y|^{q-1}$.  This proves
\eqref{eq:holder-nonlinearity}.  If $q\ge2$, the mean-value theorem
gives
\begin{align}
 |A_q(a+b)-A_q(a)|
 &\le C_q\bigl(|a|^{q-2}|b|+|b|^{q-1}\bigr),
 \label{eq:lipschitz-vector-nonlinearity}\\
 |B_q(\alpha+\beta)-B_q(\alpha)|
 &\le C_q\bigl(|\alpha|^{q-2}|\beta|+|\beta|^{q-1}\bigr).
 \label{eq:lipschitz-positive-nonlinearity}
\end{align}
Raising these estimates to $q'=q/(q-1)$ and applying Young's
inequality shows that, for either $\Phi=A_q$ or $\Phi=B_q$ and every
$\varepsilon>0$,
\begin{equation}
 |\Phi(a+b)-\Phi(a)|^{q'}
 \le \varepsilon|a|^q+C_{q,\varepsilon}|b|^q.
 \label{eq:derivative-epsilon-estimate}
\end{equation}
For $A_q$ the variables are vectors, while for $B_q$ they are scalars.
In the range $1<q<2$, \eqref{eq:derivative-epsilon-estimate} follows
directly from \eqref{eq:holder-nonlinearity}.  Finally, the mean-value
theorem and Young's inequality also give
\begin{equation}
 |F_q(a+b)-F_q(a)|
 \le \varepsilon|a|^q+C_{q,\varepsilon}|b|^q.
 \label{eq:positive-energy-epsilon-estimate}
\end{equation}

We prove part~1.  Write $Y_k:=X_k-X$ and
\[
 R_k:=A_q(Y_k+X)-A_q(Y_k)-A_q(X).
\]
Then $Y_k\to0$ and $R_k\to0$ almost everywhere.  From
\eqref{eq:derivative-epsilon-estimate}, after changing the constants,
\[
 |R_k|^{q'}
 \le \varepsilon|Y_k|^q+C_{q,\varepsilon}|X|^q.
\]
The sequence $(Y_k)$ is bounded in $L^q$.  Moreover,
\[
 \bigl(|R_k|^{q'}-\varepsilon|Y_k|^q\bigr)_+
 \longrightarrow0
\]
almost everywhere and is dominated by $C_{q,\varepsilon}|X|^q$.
Dominated convergence therefore yields
\[
 \limsup_{k\to\infty}\|R_k\|_{q'}^{q'}
 \le \varepsilon\sup_k\|Y_k\|_q^q.
\]
Letting $\varepsilon\downarrow0$ proves part~1.

For the first assertion in part~2, take $y_k:=s_k-s$ and repeat the
same argument with $B_q$ in place of $A_q$.  For the last assertion put
\[
 \rho_k:=F_q(y_k+s)-F_q(y_k)-F_q(s).
\]
Then $\rho_k\to0$ almost everywhere, and
\eqref{eq:positive-energy-epsilon-estimate} gives
\[
 |\rho_k|\le\varepsilon|y_k|^q+C_{q,\varepsilon}|s|^q.
\]
The positive-part domination argument used above, now in $L^1$, gives
$\|\rho_k\|_{L^1}\to0$.  This is precisely the claimed integral
splitting.
\end{proof}

\begin{lemma}[Splitting Lemma]
\label{lem:local-compactness}
Let $(z_k)\subset\Dp(\Hhalf)$ be bounded,
\[
 z_k\rightharpoonup z\quad\text{in }\Dp(\Hhalf),
 \qquad
 \|P_+(z_k)\|_{W^{-1,p'}(\Hhalf)}\longrightarrow0.
\]
Then, after passing to a subsequence,
\begin{equation}
 z_k\to z\quad\text{a.e.\ on }\bdH,
 \qquad
 \nabla z_k\to\nabla z\quad\text{a.e.\ in }\Hhalf,
 \label{eq:ae-convergence}
\end{equation}
 $P_+(z)=0$ and $z\ge 0$.  If $r_k:=z_k-z$, then
\begin{align}
 \|\nabla z_k\|_p^p
 &=\|\nabla z\|_p^p+\|\nabla r_k\|_p^p+o(1),
 \label{eq:bulk-splitting}\\
 \int_{\bdH}(z_k)_+^{p_*}\,dy
 &=\int_{\bdH}z_+^{p_*}\,dy
 +\int_{\bdH}(r_k)_+^{p_*}\,dy+o(1),
 \label{eq:boundary-splitting}\\
 J_+(z_k)&=J_+(z)+J_+(r_k)+o(1),
 \label{eq:functional-splitting}\\
 \|P_+(r_k)\|_{W^{-1,p'}(\Hhalf)}&\longrightarrow0.
 \label{eq:residual-splitting}
\end{align}
In addition,
\begin{equation}
 \|\nabla(r_k)_-\|_{L^p(\Hhalf)}\longrightarrow0,
 \qquad
 \|(r_k)_-\|_{L^{p_*}(\bdH)}\longrightarrow0.
 \label{eq:negative-part-small}
\end{equation}
\end{lemma}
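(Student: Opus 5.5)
The plan follows the scheme of Mercuri--Willem \cite[Lemma~3.2--3.3]{MercuriWillem2010}. The hardest step is the a.e.\ convergence of the gradients, because the equation is quasilinear; every later step is a consequence of it together with Lemma~\ref{lem:nonlinear-BL}.

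\textbf{Step 1: convergence of the traces.} Weak convergence in $\Dp(\Hhalf)$ and compactness of the trace embedding into $L^{r}_{\mathrm{loc}}(\bdH)$ for $r<p_*$ give, after a subsequence, $z_k\to z$ in $L^r_{\mathrm{loc}}(\bdH)$ and a.e.\ on $\bdH$. Rellich in the bulk gives $z_k\to z$ in $L^p_{\mathrm{loc}}(\overline{\Hhalf})$.

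\textbf{Step 2: a.e.\ convergence of $\nabla z_k$ (main obstacle).} I would use the truncation argument of Boccardo--Murat, as in \cite{MercuriWillem2010}. Fix $\eta\in C_c^\infty(\R^n)$ with $0\le\eta\le1$ and $\eta=1$ on $B_R$. Let $T_\tau(s)=\max\{-\tau,\min\{\tau,s\}\}$ and test $P_+(z_k)$ with $\varphi_k=\eta\,T_\tau(z_k-z)$, which is bounded in $\Dp$. This yields
\[
\int_{\Hhalf}\eta\bigl(|\nabla z_k|^{p-2}\nabla z_k-|\nabla z|^{p-2}\nabla z\bigr)\cdot\nabla T_\tau(z_k-z)\,dx\le o(1)+C\tau .
\]
The three error contributions are controlled as follows:
\begin{itemize}
\item the residual term is $o(1)$ because $\|P_+(z_k)\|_{W^{-1,p'}}\to0$;
\item the terms carrying $\nabla\eta$ are $o(1)$ by the local strong $L^p$ convergence of Step 1;
\item the boundary term $\int(z_k)_+^{p_*-1}\eta T_\tau(z_k-z)$ is bounded by $C\tau$.
\end{itemize}
In the first display I have subtracted the $|\nabla z|^{p-2}\nabla z$ term, which is $o(1)$ by weak convergence. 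By monotonicity (Lemma~\ref{lem:pointwise-critical-remainders}), the integrand $e_k$ is nonnegative. Splitting $B_R^+$ into $\{|z_k-z|\le\tau\}$ and its complement, whose measure tends to $0$, I get $\limsup\int_{B_R^+}e_k^\theta\le C\tau^\theta$ for some $\theta\in(0,1)$ via Hölder. Letting $\tau\to0$ gives $e_k\to0$ in $L^\theta(B_R^+)$, and strict monotonicity then forces $\nabla z_k\to\nabla z$ a.e., after a diagonal subsequence.

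\textbf{Step 3: the limit is a nonnegative critical point.} Lemma~\ref{lem:nonlinear-BL}(1), applied with $q=p$, shows that $|\nabla z_k|^{p-2}\nabla z_k\rightharpoonup|\nabla z|^{p-2}\nabla z$ weakly in $L^{p'}$. Similarly $(z_k)_+^{p_*-1}\rightharpoonup z_+^{p_*-1}$ in $L^{p_*'}(\bdH)$, which gives $P_+(z)=0$. Testing $P_+(z)=0$ with $z_-$ gives $-\|\nabla z_-\|_p^p=0$, so $z\ge0$.

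\textbf{Step 4: the splittings.} With a.e.\ convergence in hand, the classical Br\'ezis--Lieb lemma gives \eqref{eq:bulk-splitting}. Lemma~\ref{lem:nonlinear-BL}(2) gives \eqref{eq:boundary-splitting}, and \eqref{eq:functional-splitting} follows by combining the two.

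For \eqref{eq:residual-splitting}, write
\[
P_+(z_k)-P_+(z)-P_+(r_k)
\]
as the sum of a bulk difference $A_p(\nabla z_k)-A_p(\nabla r_k)-A_p(\nabla z)$ and a boundary difference $B_{p_*}(z_k)-B_{p_*}(r_k)-B_{p_*}(z)$. These tend to $0$ in $L^{p'}(\Hhalf)$ and in $L^{p_*'}(\bdH)$ respectively, by Lemma~\ref{lem:nonlinear-BL}. The trace inequality \eqref{eq:sharp-trace-cp} turns this into convergence to $0$ in $W^{-1,p'}$. Since $P_+(z)=0$ and $\|P_+(z_k)\|\to0$, this gives \eqref{eq:residual-splitting}.

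\textbf{Step 5: negative parts.} Test $P_+(r_k)$ with $(r_k)_-$, which is bounded in $\Dp$. The positive-part boundary term vanishes on the support of $(r_k)_-$, so
\[
\|\nabla(r_k)_-\|_p^p=-\langle P_+(r_k),(r_k)_-\rangle\to0
\]
by \eqref{eq:residual-splitting}. The trace inequality then gives $\|(r_k)_-\|_{L^{p_*}(\bdH)}\to0$, which is \eqref{eq:negative-part-small}.
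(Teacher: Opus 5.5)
Your proposal is correct and follows essentially the same route as the paper: a truncation--monotonicity argument for the a.e.\ convergence of $\nabla z_k$, passage to the weak limit to get $P_+(z)=0$ and $z\ge0$, the Br\'ezis--Lieb lemma together with Lemma~\ref{lem:nonlinear-BL} for the splittings, and testing $P_+(r_k)$ with $(r_k)_-$. The only difference is minor: you truncate at level $\tau$ and let $\tau\to0$ in Boccardo--Murat fashion, bounding the boundary term by $C\tau$ and using an $L^\theta$ estimate. The paper instead truncates at the fixed level $1$, notes that the boundary term is already $o(1)$ because $\eta T(z_k-z)\to0$ strongly in $L^{p_*}(\bdH)$ by dominated convergence, and then concludes via convergence in measure.
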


\begin{proof}
Let $K\subset\overline{\Hhalf}$ be compact.  Since $p<p^*$,
boundedness in $\Dp(\Hhalf)$ implies boundedness in
$W^{1,p}(U\cap\Hhalf)$ for every bounded smooth neighborhood $U$ of
$K$.  The Rellich theorem and the compact local trace embedding give,
after a diagonal extraction,
\begin{align}
 z_k&\longrightarrow z
 &&\text{strongly in }L^q_{\mathrm{loc}}(\Hhalf),
 &&1\le q<p^*,
 \label{eq:local-bulk-compactness}\\
 z_k&\longrightarrow z
 &&\text{strongly in }L^r_{\mathrm{loc}}(\bdH),
 &&1\le r<p_*.
 \label{eq:local-trace-compactness}
\end{align}
In particular, $z_k\to z$ almost everywhere both in $\Hhalf$ and on
$\bdH$.

Put
\[
 w_k:=z_k-z,
 \qquad
 A(X):=|X|^{p-2}X,
 \qquad
 T(s):=\max\{-1,\min\{s,1\}\}.
\]
Fix a compact set $K\subset\overline{\Hhalf}$ and choose
$\eta\in C_c^\infty(\overline{\Hhalf})$, $0\le\eta\le1$, with
$\eta=1$ on $K$.  The functions $\eta T(w_k)$ are bounded in
$\Dp(\Hhalf)$.  Also, $T(w_k)\to0$ strongly in every finite local
Lebesgue space by dominated convergence, while
\[
 \nabla T(w_k)=\mathbf1_{\{|w_k|<1\}}\nabla w_k
\]
is bounded in local $L^p$.  Hence
$\nabla T(w_k)\rightharpoonup0$ locally in $L^p$.

Using $\eta T(w_k)$ as a test function in $P_+(z_k)$ and expanding the
gradient of the product gives
\begin{align}
 I_k
 &:=\int_{\Hhalf}\eta
 \bigl(A(\nabla z_k)-A(\nabla z)\bigr)
 \cdot\nabla T(w_k)\,dx                                      \notag\\
 &=\langle P_+(z_k),\eta T(w_k)\rangle
   +\int_{\bdH}(z_k)_+^{p_*-1}\eta T(w_k)\,dy                \notag\\
 &\quad-\int_{\Hhalf}T(w_k)A(\nabla z_k)\cdot\nabla\eta\,dx
   -\int_{\Hhalf}\eta A(\nabla z)\cdot\nabla T(w_k)\,dx.
 \label{eq:expanded-truncation-identity}
\end{align}
Every term on the last two lines tends to zero.  The first does so
because $P_+(z_k)\to0$ and the test functions are bounded in
$\Dp(\Hhalf)$.  For the boundary term, $(z_k)_+^{p_*-1}$ is bounded
in $L^{p_*'}(\bdH)$, whereas $\eta T(w_k)\to0$ strongly in
$L^{p_*}(\bdH)$ because it is bounded, compactly supported, and
converges almost everywhere.  The cutoff term tends to zero by
H\"older's inequality, the local $L^{p'}$ boundedness of
$A(\nabla z_k)$, and the strong local $L^p$ convergence of $T(w_k)$.
The last term tends to zero by the weak convergence of
$\nabla T(w_k)$.  Thus
\begin{equation}
 I_k\longrightarrow0.
 \label{eq:truncation-monotonicity}
\end{equation}

We now prove that, after passing to a subsequence,
 $\nabla z_k\to\nabla z$ a.e. in $\Hhalf$.  By strict
monotonicity of $A$,
\[
 M_k:=\bigl(A(\nabla z_k)-A(\nabla z)\bigr)
       \cdot(\nabla z_k-\nabla z)\ge0,
\]
and \eqref{eq:truncation-monotonicity} says
\begin{equation}
 \int_{K\cap\{|w_k|<1\}}M_k\,dx\longrightarrow0.
 \label{eq:local-monotonicity-defect}
\end{equation}
Fix $\varepsilon>0$ and $R>0$.  On the compact set of pairs $(a,b)$
such that $|a|+|b|\le R$ and $|a-b|\ge\varepsilon$, continuity and
strict monotonicity give
\[
 (A(a)-A(b))\cdot(a-b)\ge c(\varepsilon,R)>0.
\]
It follows from \eqref{eq:local-monotonicity-defect} that the measure
of
\[
 \left\{x\in K:|w_k|<1,\ |
 \nabla z_k-\nabla z|\ge\varepsilon,
 |\nabla z_k|+|\nabla z|\le R\right\}
\]
tends to zero.  The measure of $K\cap\{|w_k|\ge1\}$ tends to zero by
\eqref{eq:local-bulk-compactness}; moreover, Chebyshev's inequality
and the local $L^p$ bounds make
\[
 \sup_k\bigl|\{x\in K:|\nabla z_k|+|\nabla z|>R\}\bigr|
\]
arbitrarily small as $R\to\infty$.  Therefore
$\nabla z_k\to\nabla z$ in measure on $K$.  Extracting a subsequence
on an exhaustion of $\Hhalf$ proves
$\nabla z_k\to\nabla z$ almost everywhere in $\Hhalf$.

Now
\[
 A(\nabla z_k)\to A(\nabla z)\quad\text{a.e.\ in }\Hhalf,
 \qquad
 (z_k)_+^{p_*-1}\to z_+^{p_*-1}\quad\text{a.e.\ on }\bdH.
\]
The first sequence is bounded in $L^{p'}(\Hhalf)$ and the second in
$L^{p_*'}(\bdH)$.  A bounded sequence in $L^s$, $1<s<\infty$, which
converges almost everywhere, converges weakly to its pointwise limit.
Consequently, for every $\varphi\in\Dp(\Hhalf)$,
\[
 \langle P_+(z),\varphi\rangle
 =\lim_{k\to\infty}\langle P_+(z_k),\varphi\rangle=0.
\]
Thus $P_+(z)=0$.  Testing this identity with $-z_-$ also shows
$z\ge0$.

The classical Br\'ezis--Lieb lemma, in its vector form, applied to
$\nabla z_k=\nabla r_k+\nabla z$, gives
\eqref{eq:bulk-splitting}.  The last assertion of
Lemma~\ref{lem:nonlinear-BL}, with $q=p_*$, gives
\eqref{eq:boundary-splitting}; subtracting the two identities in the
definition of $J_+$ gives \eqref{eq:functional-splitting}.

The derivative parts of Lemma~\ref{lem:nonlinear-BL} give
\begin{align}
 A(\nabla z_k)-A(\nabla r_k)-A(\nabla z)&\longrightarrow0
 &&\text{in }L^{p'}(\Hhalf),
 \label{eq:bulk-derivative-splitting}\\
 (z_k)_+^{p_*-1}-(r_k)_+^{p_*-1}-z_+^{p_*-1}&\longrightarrow0
 &&\text{in }L^{p_*'}(\bdH).
 \label{eq:trace-derivative-splitting}
\end{align}
By the trace inequality, these convergences imply
\[
 \|P_+(z_k)-P_+(r_k)-P_+(z)\|_{W^{-1,p'}(\Hhalf)}\longrightarrow0.
\]
Since $P_+(z_k)\to0$ and $P_+(z)=0$, this proves
\eqref{eq:residual-splitting}.

The Lipschitz truncation $(r_k)_-$ belongs to $\Dp(\Hhalf)$.  Testing
$P_+(r_k)$ with $-(r_k)_-$ gives the exact identity
\[
 \langle P_+(r_k),-(r_k)_-\rangle
 =\int_{\Hhalf}|\nabla(r_k)_-|^p\,dx,
\]
because $(r_k)_+(r_k)_-=0$ on the boundary.  Hence
\[
 \|\nabla(r_k)_-\|_p^p
 \le \|P_+(r_k)\|_{W^{-1,p'}(\Hhalf)}\|\nabla(r_k)_-\|_p,
\]
which proves the first convergence in \eqref{eq:negative-part-small};
the second follows from \eqref{eq:sharp-trace-cp}.  
\end{proof}

\subsection{Extraction of a nonzero boundary profile}

\begin{lemma}
\label{lem:uniform-alternative}
Let $(r_k)\subset\Dp(\Hhalf)$ be bounded and suppose that
\[
 \|P_+(r_k)\|_{W^{-1,p'}(\Hhalf)}\longrightarrow0.
\]
Then either
\begin{equation}
 \|(r_k)_+\|_{L^{p_*}(\bdH)}\longrightarrow0,
 \label{eq:vanishing-alternative}
\end{equation}
or, after a subsequence,
\begin{equation}
 \liminf_{k\to\infty}
 \int_{\bdH}(r_k)_+^{p_*}\,dy\ge A_{\mathrm T}.
 \label{eq:mass-gap}
\end{equation}
\end{lemma}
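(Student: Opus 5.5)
The plan is to test the almost-critical condition $P_+(r_k)\to0$ with $r_k$ itself and then invoke the sharp trace inequality \eqref{eq:sharp-trace-cp}. The resulting dichotomy is that the boundary mass either tends to zero or is bounded below by the value at which the normalized bubble $W$ saturates the inequality.

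\textbf{Step 1: testing with $r_k$ and with $(r_k)_+$.} Since $(r_k)$ is bounded in $\Dp(\Hhalf)$, we have $|\langle P_+(r_k),r_k\rangle|\le\|P_+(r_k)\|\,\|\nabla r_k\|_p\to0$. This gives
\[
 \int_{\Hhalf}|\nabla r_k|^p\,dx=\int_{\bdH}(r_k)_+^{p_*}\,dy+o(1).
\]
It is cleaner to test instead with $(r_k)_+\in\Dp(\Hhalf)$, whose gradient norm is also bounded. Because $\nabla(r_k)_+=\mathbf 1_{\{r_k>0\}}\nabla r_k$, this yields
\[
 \|\nabla(r_k)_+\|_p^p=\int_{\bdH}(r_k)_+^{p_*}\,dy+o(1).
\]

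\textbf{Step 2: trace inequality for the positive part.} Set $t_k:=\int_{\bdH}(r_k)_+^{p_*}\,dy$. Applying \eqref{eq:sharp-trace-cp} to $(r_k)_+$, whose trace is $(r_k)_+$ on the boundary, gives
\[
 \ST^p\,t_k^{p/p_*}\le\|\nabla(r_k)_+\|_p^p=t_k+o(1).
\]
Pass to a subsequence along which $t_k\to t\in[0,\infty)$; this is possible because $t_k$ is bounded by boundedness of $(r_k)$ and the trace inequality. In the limit, $\ST^p t^{p/p_*}\le t$.

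\textbf{Step 3: the dichotomy.} If $t>0$, then since $p_*>p$ we obtain $t^{1-p/p_*}\ge\ST^p$, that is,
\[
 t\ge\ST^{\frac{p\,p_*}{p_*-p}}.
\]
Compute $p_*-p=\frac{p(p-1)}{n-p}$, so that $\frac{p\,p_*}{p_*-p}=\frac{(n-1)p}{p-1}$. The lower bound is therefore $\ST^{(n-1)p/(p-1)}=A_{\mathrm T}$ by \eqref{eq:trace-bubble-energy}.

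To organize the alternative, suppose \eqref{eq:vanishing-alternative} fails. Then some subsequence has $t_k\ge c>0$. Passing to a further convergent subsequence gives a limit $t\ge c>0$, hence $t\ge A_{\mathrm T}$, which is \eqref{eq:mass-gap}. If instead every subsequence has limit $0$, then \eqref{eq:vanishing-alternative} holds.

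\textbf{Main obstacle.} The only delicate point is justifying that $(r_k)_+$ is an admissible test function and that the bulk term splits exactly, i.e.\ that $\int|\nabla r_k|^{p-2}\nabla r_k\cdot\nabla(r_k)_+=\int|\nabla(r_k)_+|^p$. This is the standard Lipschitz truncation property in $\Dp(\Hhalf)$, already used for $(r_k)_-$ in Lemma~\ref{lem:local-compactness}, so I expect no real difficulty. The exponent identification $p p_*/(p_*-p)=(n-1)p/(p-1)$ is routine.
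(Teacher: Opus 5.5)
Your proposal is correct and is essentially the paper's argument. The only difference is that the paper tests $P_+(r_k)$ with $r_k$ and then uses $\|\nabla(r_k)_+\|_p\le\|\nabla r_k\|_p$, whereas you test with $(r_k)_+$ directly; both give $\ST^p t_k^{p/p_*}\le t_k+o(1)$ and the same threshold $\ST^{pp_*/(p_*-p)}=A_{\mathrm T}$, and your subsequence bookkeeping for the dichotomy is, if anything, more explicit than the paper's.
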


\begin{proof}
Testing the residual by $r_k$ gives
\begin{equation}
 \int_{\Hhalf}|\nabla r_k|^p\,dx
 =\int_{\bdH}(r_k)_+^{p_*}\,dy+o(1).
 \label{eq:asymptotic-Nehari}
\end{equation}
Here boundedness absorbs the dual error.  Applying
\eqref{eq:sharp-trace-cp} to $(r_k)_+$ gives
\[
 \ST^p
 \left(\int_{\bdH}(r_k)_+^{p_*}\,dy\right)^{p/p_*}
 \le \int_{\Hhalf}|\nabla(r_k)_+|^p\,dx
 \le \int_{\Hhalf}|\nabla r_k|^p\,dx.
\]
Together with \eqref{eq:asymptotic-Nehari}, this says that a nonzero
limiting mass is at least
$\ST^{pp_*/(p_*-p)}=\ST^{p(n-1)/(p-1)}=A_{\mathrm T}$.
\end{proof}

\begin{lemma}[Boundary concentration produces a nonzero profile]
\label{lem:profile-extraction}
Let $(r_k)\subset\Dp(\Hhalf)$ be bounded and suppose that
\[
 \|P_+(r_k)\|_{W^{-1,p'}(\Hhalf)}\longrightarrow0
\]
and that \eqref{eq:vanishing-alternative} fails.  Then there are
$\lambda_k>0$, $\xi_k\in\R^{n-1}$ and a nontrivial nonnegative
solution $V\in\Dp(\Hhalf)$ of
\eqref{eq:normalized-trace-equation} such that
\begin{equation}
 G_{\lambda_k,\xi_k}^{-1}r_k\rightharpoonup V
 \quad\text{in }\Dp(\Hhalf).
 \label{eq:profile-weak-limit}
\end{equation}
If
\[
 \widetilde r_k:=r_k-G_{\lambda_k,\xi_k}V,
\]
then
\begin{align}
 \|\nabla r_k\|_p^p
 &=\|\nabla V\|_p^p+\|\nabla\widetilde r_k\|_p^p+o(1),
 \label{eq:profile-norm-splitting}\\
 J_+(r_k)&=J_+(V)+J_+(\widetilde r_k)+o(1),
 \label{eq:profile-energy-splitting}\\
 \|P_+(\widetilde r_k)\|_{W^{-1,p'}(\Hhalf)}&\longrightarrow0.
 \label{eq:profile-residual-splitting}
\end{align}
\end{lemma}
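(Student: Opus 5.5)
By Lemma~\ref{lem:uniform-alternative}, after a subsequence, $\liminf_k\int_{\bdH}(r_k)_+^{p_*}\,dy\ge A_{\mathrm T}$. We plan to find the profile through a Lions-type concentration function on the boundary, rescale by the dislocations $G_{\lambda,\xi}$, show that the weak limit is nonzero, and then obtain the splittings from Lemma~\ref{lem:local-compactness}.

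\textbf{Step 1: normalization by the concentration function.} Let
\[
 Q_k(\lambda):=\sup_{\xi\in\R^{n-1}}\int_{B'_\lambda(\xi)}(r_k)_+^{p_*}\,dy ,
\]
where $B'_\lambda(\xi)\subset\bdH$ is the $(n-1)$-dimensional ball. Fix $0<\tau<\min\{A_{\mathrm T},\liminf_k\|(r_k)_+\|_{p_*}^{p_*}\}$, chosen small in terms of $\ST$ (specified in Step 2). Since $Q_k$ is continuous and increases from $0$ to $\|(r_k)_+\|_{p_*}^{p_*}$, we can choose $\lambda_k$ and $\xi_k$ with $Q_k(\lambda_k)=\tau$ and $\int_{B'_{\lambda_k}(\xi_k)}(r_k)_+^{p_*}\ge\tau/2$. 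Put $s_k:=G_{\lambda_k,\xi_k}^{-1}r_k$. The maps $G$ are isometries and preserve $P_+$, so $s_k$ is bounded, $P_+(s_k)\to0$, every unit boundary ball carries at most $\tau$ of $(s_k)_+^{p_*}$-mass, and $B'_1(0)$ carries at least $\tau/2$. Pass to a subsequence $s_k\rightharpoonup V$.

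\textbf{Step 2: $V\neq0$ (main obstacle).} Suppose $V=0$. Then $s_k\to0$ in $L^r_{\mathrm{loc}}(\bdH)$ for $r<p_*$ and in $L^p_{\mathrm{loc}}(\Hhalf)$. Take a cutoff $\eta\in C_c^\infty$ supported in a half-ball of radius $1$ around a boundary point (or around $0$), and test $P_+(s_k)$ with $\eta^p s_k$. This gives
\[
 \int\eta^p|\nabla s_k|^p\le\int\eta^p (s_k)_+^{p_*}+o(1),
\]
where the cross term containing $\nabla\eta$ is $o(1)$ by local strong convergence. Hölder's inequality together with the small-mass bound gives
\[
 \int\eta^p(s_k)_+^{p_*}\le\tau^{(p_*-p)/p_*}\|\eta s_k\|_{L^{p_*}(\bdH)}^p,
\]
and the trace inequality \eqref{eq:sharp-trace-cp} gives
\[
 \|\eta s_k\|_{L^{p_*}(\bdH)}^p\le\ST^{-p}\|\nabla(\eta s_k)\|_p^p=\ST^{-p}\int\eta^p|\nabla s_k|^p+o(1).
\]
Choosing $\tau$ with $\tau^{(p_*-p)/p_*}\ST^{-p}<1$ forces $\int\eta^p|\nabla s_k|^p\to0$, hence $\int_{B'_{1/2}}(s_k)_+^{p_*}\to0$. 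To reach $B'_1$, redo the argument with radius-$2$ balls: these are covered by boundedly many unit balls, so their mass is at most $C_n\tau$, and we shrink $\tau$ accordingly. Running the cutoff argument on the ball $B'_1(0)$ then contradicts its mass lower bound $\tau/2$. So $V\ne0$. The delicate part of this step is choosing $\tau$ consistently with the covering constant.

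\textbf{Step 3: conclusion.} Apply Lemma~\ref{lem:local-compactness} to $z_k=s_k$ with limit $z=V$. It gives $P_+(V)=0$ and $V\ge0$, so $V$ solves \eqref{eq:normalized-trace-equation}, and it also gives the splittings \eqref{eq:bulk-splitting}, \eqref{eq:functional-splitting} and \eqref{eq:residual-splitting} for $s_k-V$. Since
\[
 s_k-V=G_{\lambda_k,\xi_k}^{-1}\widetilde r_k
\]
and $G$ preserves the gradient norm, $J_+$ and $\|P_+\|$, these splittings transfer directly to \eqref{eq:profile-norm-splitting}--\eqref{eq:profile-residual-splitting}.
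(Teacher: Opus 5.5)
Your proposal is correct and follows essentially the same route as the paper. Both arguments use a boundary L\'evy concentration function normalized at a small level $\tau$ (the paper's $\delta$) and rescale by $G_{\lambda_k,\xi_k}^{-1}$. Both prove $V\ne0$ by contradiction, using a cutoff test function, H\"older's inequality with the small-mass bound on $D_2(0)$ (covered by $N$ unit disks, with $\tau$ shrunk so that $\ST^{-p}(N\tau)^{1-p/p_*}<1$), and the trace inequality. Both then apply Lemma~\ref{lem:local-compactness} and transport the conclusions back by the isometries $G_{\lambda_k,\xi_k}$. The only cosmetic difference is that you test with $\eta^p s_k$ rather than $\eta^p(s_k)_+$, so you treat the $\nabla\eta$ cross term directly as $o(1)$ instead of going through Young's inequality.
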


\begin{proof}
Choose a number $\delta>0$, depending only on $n,p$, sufficiently
small as specified below and also satisfying $2\delta<A_{\mathrm T}$.
By Lemma~\ref{lem:uniform-alternative}, the total positive boundary
mass is eventually larger than $2\delta$.  Introduce the boundary
L\'evy concentration function
\[
 Q_k(\rho)
 :=\sup_{\xi\in\R^{n-1}}
 \int_{D_\rho(\xi)}(r_k)_+^{p_*}\,dy,
\]
where $D_\rho(\xi)\subset\bdH$ is the $(n-1)$-dimensional disk.  The
standard continuity argument for $Q_k$ gives $\lambda_k>0$ such that
$Q_k(\lambda_k)=\delta$.  Choose a center at which the supremum is
attained up to an error smaller than $\delta/2$, and denote it by
$\xi_k$.  After rescaling,
\begin{equation}
 \sup_{\xi\in\R^{n-1}}
 \int_{D_1(\xi)}(z_k)_+^{p_*}\,dy=\delta,
 \qquad
 \int_{D_1(0)}(z_k)_+^{p_*}\,dy\ge\frac\delta2,
 \label{eq:normalized-concentration}
\end{equation}
where $z_k:=G_{\lambda_k,\xi_k}^{-1}r_k$.

The sequence $(z_k)$ is bounded in $\Dp(\Hhalf)$ and
$P_+(z_k)\to0$.  Passing to a subsequence, let
$z_k\rightharpoonup V$ in $\Dp(\Hhalf)$.  We claim that
$V\not\equiv0$.  Suppose otherwise.  The local bulk and trace
compactness in Lemma~\ref{lem:local-compactness} then give
\begin{equation}
 z_k\longrightarrow0\quad\text{in }L^p(B_3\cap\Hhalf)
 \quad\text{and in }L^p(D_3(0)).
 \label{eq:local-vanishing-after-normalization}
\end{equation}
This subcritical convergence alone does not contradict
\eqref{eq:normalized-concentration}; we use the equation to upgrade it
at the critical exponent.

Choose $\eta\in C_c^\infty(\overline\Hhalf)$ such that
$0\le\eta\le1$, $\eta=1$ on $B_1\cap\overline\Hhalf$, and
$\operatorname{supp}\eta\subset B_2$.  Cover $D_2(0)$ by
$N=N(n)$ unit disks.  The first part of
\eqref{eq:normalized-concentration} implies
\begin{equation}
 \int_{D_2(0)}(z_k)_+^{p_*}\,dy\le N\delta.
 \label{eq:small-mass-on-fixed-disk}
\end{equation}
The functions $\eta^p(z_k)_+$ are bounded in $\Dp(\Hhalf)$, so they
may be used as test functions in $P_+(z_k)=o(1)$.  Since
$\nabla(z_k)_+=\mathbf1_{\{z_k>0\}}\nabla z_k$, this gives
\begin{align*}
 \int_{\Hhalf}\eta^p|\nabla(z_k)_+|^p\,dx
 &=\int_{\bdH}\eta^p(z_k)_+^{p_*}\,dy\\
 &\quad-p\int_{\Hhalf}\eta^{p-1}(z_k)_+
 |\nabla(z_k)_+|^{p-2}\nabla(z_k)_+\cdot\nabla\eta\,dx+o(1).
\end{align*}
Young's inequality, followed by
\eqref{eq:local-vanishing-after-normalization}, therefore yields
\begin{equation}
 X_k:=\int_{\Hhalf}|\nabla(\eta(z_k)_+)|^p\,dx
 \le C\int_{\bdH}\eta^p(z_k)_+^{p_*}\,dy+o(1).
 \label{eq:localized-gradient-control}
\end{equation}
On the other hand, H\"older's inequality,
\eqref{eq:small-mass-on-fixed-disk}, and the trace inequality give
\begin{align}
 \int_{\bdH}\eta^p(z_k)_+^{p_*}\,dy
 &=\int_{D_2(0)}(z_k)_+^{p_*-p}
   \bigl(\eta(z_k)_+\bigr)^p\,dy\notag\\
 &\le (N\delta)^{1-p/p_*}
 \left(\int_{\bdH}\bigl(\eta(z_k)_+\bigr)^{p_*}\,dy
 \right)^{p/p_*}\notag\\
 &\le \ST^{-p}(N\delta)^{1-p/p_*}X_k.
 \label{eq:localized-boundary-absorption}
\end{align}
At the beginning choose $\delta$, still with
$2\delta<A_{\mathrm T}$, so small that
\[
 C\ST^{-p}(N\delta)^{1-p/p_*}<1.
\]
Combining \eqref{eq:localized-gradient-control} and
\eqref{eq:localized-boundary-absorption} gives $X_k\to0$.  By the
trace inequality and $\eta=1$ on $D_1(0)$,
\[
 \int_{D_1(0)}(z_k)_+^{p_*}\,dy\longrightarrow0,
\]
contrary to the second part of
\eqref{eq:normalized-concentration}.  Thus $V\not\equiv0$.

Lemma~\ref{lem:local-compactness}, applied to $(z_k)$, shows that
$P_+(V)=0$ and gives the splitting after subtracting $V$.
Transporting the conclusions back by $G_{\lambda_k,\xi_k}$ gives
\eqref{eq:profile-norm-splitting}--
\eqref{eq:profile-residual-splitting}.  Testing the equation for $V$
by $-V_-$ gives $V_-=0$.  Since $V$ is nonzero, the strong maximum
principle makes it positive in $\Hhalf$.
\end{proof}

\begin{lemma}[Energy carried by a profile]
\label{lem:profile-gap}
Every nontrivial nonnegative solution $V\in\Dp(\Hhalf)$ of
\eqref{eq:normalized-trace-equation} satisfies
\begin{equation}
 \|\nabla V\|_{L^p(\Hhalf)}^p
 =\|V\|_{L^{p_*}(\bdH)}^{p_*}\ge A_{\mathrm T},
 \qquad
 J_+(V)\ge E_{\mathrm T}.
 \label{eq:profile-gap}
\end{equation}
Equality holds after the classification theorem \cite[Theorem~1.1]{Zhou2024}, because every such
$V$ is a normalized trace bubble.
\end{lemma}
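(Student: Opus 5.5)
The argument is the Nehari identity combined with the sharp trace inequality, exactly as in Lemma~\ref{lem:uniform-alternative}, followed by a direct energy computation. Let $V\not\equiv0$ be nonnegative and solve \eqref{eq:normalized-trace-equation} weakly in $\Dp(\Hhalf)$. Since $V\ge0$, we have $V_+=V$ and $\langle P_+(V),\varphi\rangle=\langle\PT(V),\varphi\rangle=0$ for every $\varphi\in\Dp(\Hhalf)$. Testing with $\varphi=V$, which is admissible because the boundary term is controlled by \eqref{eq:sharp-trace-cp}, gives the Nehari identity
\[
 \|\nabla V\|_{L^p(\Hhalf)}^p=\int_{\bdH}V^{p_*}\,dy=:M .
\]
This is the first equality in \eqref{eq:profile-gap}. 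Moreover $M>0$: if $M=0$, then $\nabla V=0$, and $V\in L^{p^*}(\Hhalf)$ forces $V=0$.

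Next, apply \eqref{eq:sharp-trace-cp} to $V$. This gives $\ST^pM^{p/p_*}\le M$, that is, $M^{1-p/p_*}\ge\ST^p$. Since $p_*>p$, we may divide and obtain
\[
 M\ge\ST^{\frac{pp_*}{p_*-p}}.
\]
For the exponent, $p_*-p=\frac{p(p-1)}{n-p}$, so $\frac{pp_*}{p_*-p}=\frac{(n-1)p}{p-1}$. Hence $M\ge\ST^{(n-1)p/(p-1)}=A_{\mathrm T}$ by \eqref{eq:trace-bubble-energy}.

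For the energy, the Nehari identity gives
\[
 J_+(V)=\Bigl(\frac1p-\frac1{p_*}\Bigr)M .
\]
We compute $\frac1p-\frac1{p_*}=\frac{(n-1)-(n-p)}{(n-1)p}=\frac{p-1}{p(n-1)}$. Therefore $J_+(V)\ge\frac{p-1}{p(n-1)}A_{\mathrm T}=E_{\mathrm T}$.

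For the equality statement, positivity of $V$ (strong maximum principle, as at the end of the proof of Lemma~\ref{lem:profile-extraction}) together with the classification \cite[Theorem~1.1]{Zhou2024} of positive finite-energy solutions of \eqref{eq:normalized-trace-equation} shows that $V=W[\lambda,\xi]$ for some $\lambda>0$ and $\xi\in\R^{n-1}$. Then \eqref{eq:trace-bubble-energy} gives $M=A_{\mathrm T}$ exactly, and the computation above gives $J_+(V)=E_{\mathrm T}$.

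No step here is a genuine obstacle. The only point needing care is to use the Nehari identity rather than minimality, so that the inequality holds for every solution before the classification is invoked.
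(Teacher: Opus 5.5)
Your proposal is correct and is essentially the paper's proof: you test the equation with $V$ to get the Nehari identity, apply the sharp trace inequality \eqref{eq:sharp-trace-cp} to obtain $\|\nabla V\|_p^p\ge \ST^{pp_*/(p_*-p)}=A_{\mathrm T}$, compute $J_+(V)=\frac{p-1}{p(n-1)}\|\nabla V\|_p^p$, and invoke \cite[Theorem~1.1]{Zhou2024} for equality. You also spell out two points the paper leaves implicit: the exponent arithmetic, and why the Nehari mass is positive.
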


\begin{proof}
Testing the equation by $V$ gives the first equality.  Combining it
with \eqref{eq:sharp-trace-cp} yields the lower bound for the gradient
energy.  Finally,
\[
 J_+(V)
 =\left(\frac1p-\frac1{p_*}\right)\|\nabla V\|_p^p
 =\frac{p-1}{p(n-1)}\|\nabla V\|_p^p.
\]
The equality statement follows from \cite[Theorem~1.1]{Zhou2024}.
\end{proof}

\begin{lemma}
\label{lem:dislocation-facts}
The transformations in \eqref{eq:trace-dislocation} satisfy
\begin{equation}
 G_{\lambda,\xi}G_{\mu,\eta}
 =G_{\lambda\mu,\,\xi+\lambda\eta},
 \qquad
 G_{\lambda,\xi}^{-1}=G_{\lambda^{-1},-\xi/\lambda}.
 \label{eq:dislocation-group-law}
\end{equation}
Let $a_k>0$ and $b_k\in\R^{n-1}$.
\begin{enumerate}
\item If $a_k\to a\in(0,\infty)$ and $b_k\to b$, then
\[
 v_k\rightharpoonup v\ \text{in }\Dp(\Hhalf)
 \quad\Longrightarrow\quad
 G_{a_k,b_k}v_k\rightharpoonup G_{a,b}v
 \ \text{in }\Dp(\Hhalf).
\]
\item If
\begin{equation}
 a_k+\frac1{a_k}+\frac{|b_k|^2}{a_k}\longrightarrow\infty,
 \label{eq:abstract-parameter-divergence}
\end{equation}
then $G_{a_k,b_k}v\rightharpoonup0$ in $\Dp(\Hhalf)$ for every fixed
$v\in\Dp(\Hhalf)$.
\end{enumerate}
Moreover, if the left side of
\eqref{eq:abstract-parameter-divergence} is bounded, then $(a_k,b_k)$
has a subsequence converging in $(0,\infty)\times\R^{n-1}$.
\end{lemma}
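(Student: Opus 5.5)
The plan is to verify everything by direct computation from the definition $(G_{\lambda,\xi}f)(y,t)=\lambda^{-\frac{n-p}{p}}f\bigl(\frac{y-\xi}{\lambda},\frac t\lambda\bigr)$.

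\textbf{Group law.} For the composition I compute
\[
G_{\lambda,\xi}G_{\mu,\eta}f(y,t)=\lambda^{-\frac{n-p}{p}}\mu^{-\frac{n-p}{p}}f\Bigl(\frac{(y-\xi)/\lambda-\eta}{\mu},\frac{t}{\lambda\mu}\Bigr).
\]
Since $(y-\xi)/\lambda-\eta=(y-\xi-\lambda\eta)/\lambda$, this equals $G_{\lambda\mu,\xi+\lambda\eta}f$. Setting $\mu=\lambda^{-1}$ and $\eta=-\xi/\lambda$ gives $G_{1,0}=\mathrm{id}$, which is the inverse formula.

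\textbf{Part 1.} Each $G_{a,b}$ is a linear isometry of $\Dp(\Hhalf)$. I also use that $(a,b)\mapsto G_{a,b}w$ is strongly continuous for fixed $w$. This holds for $w\in C_c^\infty(\overline{\Hhalf})$ by dominated convergence on $\nabla$, and it extends to all $w$ by density together with the uniform isometry bound.

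Fix $\varphi\in L^{p'}(\Hhalf;\R^n)$, viewed as a functional $w\mapsto\int\nabla w\cdot\varphi$; by Hahn–Banach/Riesz these functionals exhaust the dual of the gradient image. Then
\[
\int\nabla(G_{a_k,b_k}v_k)\cdot\varphi=\int\nabla v_k\cdot G^*_{a_k,b_k}\varphi,
\]
where $G^*_{a,b}$ is the adjoint transformation on $L^{p'}$ fields. Concretely $G^*_{a,b}\varphi(x)=a^{\frac{n-p}{p}+n-1}\varphi(ax',ax_n)$ with $x'\mapsto ax'+b$, which is again an isometry of $L^{p'}$ and strongly continuous in $(a,b)$. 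Hence $G^*_{a_k,b_k}\varphi\to G^*_{a,b}\varphi$ strongly in $L^{p'}$. Weak $\times$ strong convergence then gives convergence to $\int\nabla(G_{a,b}v)\cdot\varphi$.

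\textbf{Part 2.} By density and uniform boundedness it suffices to take $v$ and $\varphi$ compactly supported and smooth. We need $\int\nabla(G_{a_k,b_k}v)\cdot\varphi\to0$. The function $\nabla G_{a,b}v$ is supported in the box $\{|y-b|\le Ra,\ t\le Ra\}$ and has $L^p$ norm equal to $\|\nabla v\|_p$. I split into cases after subsequences.

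If $a_k\to\infty$, then by Hölder the integral is at most $\|\nabla v\|_p\|\varphi\|_{L^{p'}(\text{box})}$. Because of the scaling, I bound it instead through the sup norm:
\[
\Bigl|\int\nabla(G v)\cdot\varphi\Bigr|\le\|\varphi\|_\infty\,\|\nabla G v\|_{L^1}=\|\varphi\|_\infty\,a^{\,n-\frac{n-p}{p}-1}\|\nabla v\|_1 .
\]
The exponent of $a$ here is $n-1-\frac{n-p}{p}$, which is positive, so this bound fails for large $a$. I therefore swap roles: $\|\nabla Gv\|_\infty=a^{-\frac{n-p}{p}-1}\|\nabla v\|_\infty$, and $\varphi\in L^1$, so the integral is $O(a^{-n/p})\to0$.

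If $a_k\to0$, the support shrinks to a point, and the integral is at most $\|\nabla v\|_p\|\varphi\|_{L^{p'}(\text{box}_k)}\to0$ by absolute continuity, since the box measure tends to zero.

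Otherwise $a_k$ stays in a compact subset of $(0,\infty)$ while $|b_k|\to\infty$. Then the supports eventually separate from $\operatorname{supp}\varphi$, and the integral is zero.

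\textbf{Compactness statement.} If $a_k+1/a_k+|b_k|^2/a_k$ is bounded, then $a_k$ lies in $[c,C]\subset(0,\infty)$ and $|b_k|^2\le Ca_k$ is bounded. Bolzano–Weierstrass then gives a convergent subsequence.

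\textbf{Main obstacle.} The main obstacle is correctly identifying the adjoint scaling and the regime $a_k\to\infty$ in Part 2. As above, I would handle that regime with the $L^\infty$–$L^1$ pairing rather than the $L^p$–$L^{p'}$ pairing.
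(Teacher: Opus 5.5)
Your proof is correct and follows the paper's proof essentially step for step: the group law by direct substitution; Part~1 via the adjoint dilation--translation acting isometrically and strongly continuously on $L^{p'}(\Hhalf;\R^n)$ fields, followed by weak--strong pairing; Part~2 by density and the three regimes $a_k\to0$ (support of vanishing measure), $a_k\to\infty$ (where $\|\nabla G_{a_k,b_k}v\|_\infty=a_k^{-n/p}\|\nabla v\|_\infty$), and bounded $a_k$ with $|b_k|\to\infty$ (disjoint supports); and Bolzano--Weierstrass for the last claim. One small slip: the adjoint is $G^*_{a,b}\varphi(x)=a^{n/p'}\varphi(b+ax',ax_n)$, so the exponent is $n-1-\frac{n-p}{p}$ rather than $\frac{n-p}{p}+n-1$, and only with this exponent is it the $L^{p'}$-isometry you invoke; the rest of the argument is unaffected.
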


\begin{proof}
The identities \eqref{eq:dislocation-group-law} follow by direct
calculation.  To prove part~1, introduce the isometry on vector fields
\[
 (T_{a,b}F)(x,t)
 :=a^{-n/p}F\left(\frac{x-b}{a},\frac{t}{a}\right),
 \qquad F\in L^p(\Hhalf;\R^n).
\]
Then $\nabla(G_{a,b}v)=T_{a,b}\nabla v$.  The adjoint action on
$L^{p'}(\Hhalf;\R^n)$ is
\[
 (T_{a,b}^*\phi)(y,s):=a^{n/p'}\phi(b+ay,as).
\]
If $a_k\to a\in(0,\infty)$ and $b_k\to b$, then
$T_{a_k,b_k}^*\phi\to T_{a,b}^*\phi$ strongly in
$L^{p'}(\Hhalf;\R^n)$, first for smooth compactly supported vector
fields and then, by density and the isometry property, for every
$\phi\in L^{p'}(\Hhalf;\R^n)$.  Hence
\begin{align*}
 \langle\nabla(G_{a_k,b_k}v_k),\phi\rangle
 &=\langle\nabla v_k,T_{a_k,b_k}^*\phi\rangle\\
 &=\langle\nabla v_k,T_{a,b}^*\phi\rangle
 +\langle\nabla v_k,
 (T_{a_k,b_k}^*-T_{a,b}^*)\phi\rangle\\
 &\longrightarrow
 \langle\nabla v,T_{a,b}^*\phi\rangle
 =\langle\nabla(G_{a,b}v),\phi\rangle,
\end{align*}
because $(v_k)$ is bounded in $\Dp(\Hhalf)$.  This proves part~1.

For part~2 it suffices, again by density, to consider a smooth compactly
supported $v$.  After passing to a subsequence,
\eqref{eq:abstract-parameter-divergence} reduces to one of the three
cases
\[
 a_k\to0,
 \qquad a_k\to\infty,
 \qquad 0<c\le a_k\le C\ \text{ and }\ |b_k|\to\infty.
\]
In the first case the support concentrates in sets whose measure tends
to zero; in the second the gradient converges uniformly to zero on
every fixed compact set; in the third the support escapes every compact
set.  Testing against compactly supported $L^{p'}$ vector fields proves
weak convergence to zero, and density gives the assertion for every
$v\in\Dp(\Hhalf)$.  Finally, if the expression in
\eqref{eq:abstract-parameter-divergence} is bounded, then $a_k$ is
bounded above and away from zero and $b_k$ is bounded, which proves the
last assertion.
\end{proof}

\subsection{Proof of global compactness}

\begin{proof}[Proof of Theorem~\ref{thm:global-compactness}]
By \eqref{eq:positive-signed-comparison}, the assumptions imply
$P_+(u_k)\to0$ in $W^{-1,p'}(\Hhalf)$.  Passing to a subsequence, write
\[
 u_k\rightharpoonup U^0\quad\text{in }\Dp(\Hhalf).
\]
Lemma~\ref{lem:local-compactness} shows that $P_+(U^0)=0$ and that
subtraction of $U^0$ splits the norm, boundary mass, energy, and
residual.  We initialize the profile list as follows.  If $U^0\ne0$,
put
\[
 q:=1,\qquad V^1:=U^0,\qquad
 \lambda_1^{(k)}:=1,\quad\xi_1^{(k)}:=0,\quad
 g_1^{(k)}:=G_{1,0},\qquad
 r_k^1:=u_k-V^1.
\]
If $U^0=0$, put $q:=0$ and $r_k^0:=u_k$.  In both cases
\begin{equation}
 r_k^q\rightharpoonup0,\qquad P_+(r_k^q)\longrightarrow0,
 \label{eq:initial-remainder-properties}
\end{equation}
and, when $q=1$, $(g_1^{(k)})^{-1}r_k^1\rightharpoonup0$.

We now construct the remaining profiles, beginning with $j=q+1$.
Suppose that profiles up to index $j-1$ have been constructed and that
$r_k^{j-1}$ is bounded with $P_+(r_k^{j-1})\to0$.  If
$(r_k^{j-1})_+\to0$ in $L^{p_*}(\bdH)$, the construction stops.
Otherwise Lemma~\ref{lem:profile-extraction} gives a transformation
\[
 g_j^{(k)}:=G_{\lambda_j^{(k)},\xi_j^{(k)}}
\]
and a nonzero profile $V^j$ such that
\begin{equation}
 z_k^j:=(g_j^{(k)})^{-1}r_k^{j-1}\rightharpoonup V^j,
 \qquad
 r_k^j:=r_k^{j-1}-g_j^{(k)}V^j.
 \label{eq:inductive-profile-definition}
\end{equation}
In particular,
\begin{equation}
 (g_j^{(k)})^{-1}r_k^j=z_k^j-V^j\rightharpoonup0.
 \label{eq:remainder-zero-in-own-coordinates}
\end{equation}
The norm and residual splittings in
Lemma~\ref{lem:profile-extraction} show that $r_k^j$ is bounded and
$P_+(r_k^j)\to0$.

The construction stops after finitely many steps.  Indeed, iteration
of the norm splitting gives, at every constructed index $j$,
\begin{equation}
 \|\nabla u_k\|_p^p
 =\sum_{\ell=1}^j\|\nabla V^\ell\|_p^p
 +\|\nabla r_k^j\|_p^p+o(1).
 \label{eq:iterated-norm-splitting}
\end{equation}
Every profile contributes at least $A_{\mathrm T}$ by
Lemma~\ref{lem:profile-gap}, whereas $(u_k)$ is bounded.  Consequently
there is a final index $m$ and
\[
 \|(r_k^m)_+\|_{L^{p_*}(\bdH)}\longrightarrow0.
\]
Testing $P_+(r_k^m)$ by $r_k^m$ gives
\[
 \|\nabla r_k^m\|_p^p
 =\int_{\bdH}(r_k^m)_+^{p_*}\,dy+o(1),
\]
and hence $r_k^m\to0$ strongly in $\Dp(\Hhalf)$.

By construction,
\[
 u_k=\sum_{j=1}^m g_j^{(k)}V^j+r_k^m,
\]
so this is already the asserted strong decomposition before
classification.

We next prove the parameter orthogonality, separately from the
extraction argument.  We use induction on the later profile index $j$.
Suppose that orthogonality has already been proved for all pairs whose
second index is less than $j$, and fix $i<j$.  From the successive
definitions of the remainders,
\begin{equation}
 (g_i^{(k)})^{-1}r_k^{j-1}
 =(g_i^{(k)})^{-1}r_k^i
 -\sum_{\ell=i+1}^{j-1}(g_i^{(k)})^{-1}g_\ell^{(k)}V^\ell.
 \label{eq:earlier-coordinate-remainder-expansion}
\end{equation}
The first term tends weakly to zero by
\eqref{eq:remainder-zero-in-own-coordinates} when $V^i$ was extracted,
and by \eqref{eq:initial-remainder-properties} when $V^i=U^0$ is the
fixed weak limit.  Each term in the sum tends weakly to zero by the
inductive orthogonality and Lemma~\ref{lem:dislocation-facts}.  Thus
\begin{equation}
 (g_i^{(k)})^{-1}r_k^{j-1}\rightharpoonup0.
 \label{eq:later-remainder-zero-in-earlier-coordinates}
\end{equation}
On the other hand, the group law gives
\begin{equation}
 (g_i^{(k)})^{-1}g_j^{(k)}
 =G_{a_{ij,k},b_{ij,k}},
 \qquad
 a_{ij,k}:=\frac{\lambda_j^{(k)}}{\lambda_i^{(k)}},
 \quad
 b_{ij,k}:=\frac{\xi_j^{(k)}-\xi_i^{(k)}}{\lambda_i^{(k)}}.
 \label{eq:relative-dislocation-parameters}
\end{equation}
If the desired orthogonality failed, then along a subsequence
\begin{align*}
 a_{ij,k}+\frac1{a_{ij,k}}+\frac{|b_{ij,k}|^2}{a_{ij,k}}
 &=\frac{\lambda_j^{(k)}}{\lambda_i^{(k)}}
 +\frac{\lambda_i^{(k)}}{\lambda_j^{(k)}}
 +\frac{|\xi_j^{(k)}-\xi_i^{(k)}|^2}
 {\lambda_i^{(k)}\lambda_j^{(k)}}
\end{align*}
would be bounded.  Lemma~\ref{lem:dislocation-facts} would then give
a further subsequence for which
\[
 (g_i^{(k)})^{-1}g_j^{(k)}\longrightarrow h.
\]
Since
\[
 z_k^j=(g_j^{(k)})^{-1}r_k^{j-1}\rightharpoonup V^j,
\]
part~1 of Lemma~\ref{lem:dislocation-facts} would give
\[
 (g_i^{(k)})^{-1}r_k^{j-1}
 =\bigl((g_i^{(k)})^{-1}g_j^{(k)}\bigr)z_k^j
 \rightharpoonup hV^j\ne0,
\]
contradicting \eqref{eq:later-remainder-zero-in-earlier-coordinates}.
This proves \eqref{eq:global-parameter-orthogonality}.

Iterating the initial splittings
\eqref{eq:bulk-splitting}--\eqref{eq:functional-splitting} and the
profile splittings
\eqref{eq:profile-norm-splitting}--\eqref{eq:profile-energy-splitting}, and using the strong convergence
of the last remainder, gives
\begin{align*}
 \int_{\Hhalf}|\nabla u_k|^p\,dx
 &\longrightarrow\sum_{j=1}^m
 \int_{\Hhalf}|\nabla V^j|^p\,dx,\\
 J_+(u_k)&\longrightarrow\sum_{j=1}^mJ_+(V^j).
\end{align*}
Finally, the classification theorem \cite[Theorem~1.1]{Zhou2024}
identifies each $V^j$ with $W[\mu_j,\eta_j]$ for fixed $\mu_j>0$ and
$\eta_j\in\R^{n-1}$.  By \eqref{eq:dislocation-group-law},
\[
 g_j^{(k)}V^j
 =W[\lambda_j^{(k)}\mu_j,
 \xi_j^{(k)}+\lambda_j^{(k)}\eta_j].
\]
This absorption of fixed parameters preserves orthogonality: if
$h_j:=G_{\mu_j,\eta_j}$, then the new relative dislocation is
$h_i^{-1}\bigl((g_i^{(k)})^{-1}g_j^{(k)}\bigr)h_j$, and multiplication
by fixed group elements preserves escape from compact subsets of the
parameter group.  Renaming the final parameters proves
\eqref{eq:global-decomposition}--\eqref{eq:global-energy-quantization}.
\end{proof}

Theorem~\ref{thm:global-compactness} yields
Corollary~\ref{thm:trace-nu-bubble-compactness}.  Combining
Theorem~\ref{thm:trace-critical-stability} with the case $\nu=1$ of
Corollary~\ref{thm:trace-nu-bubble-compactness} then gives
Corollary~\ref{cor:global-one-bubble-critical-stability}.
\section{Use of Generative-AI Tools Declaration}
\textbf{Statement:} During the preparation of this work, the authors used ChatGPT (OpenAI) to assist in identifying potential grammatical and logical errors in the manuscript. After using this tool, the authors reviewed and edited the content as needed and take full responsibility for the content of the published article.
\section{Acknowledgments}
  We would like to thank Prof. Xi-Nan Ma for his advanced guidance. This work was supported by National Natural Science Foundation of China [grant number: 2025YFA1017601].

\end{document}